\renewcommand{\phi}{\varphi}
\newtheorem{theorem}{Theorem}[section]
\newtheorem{fact}[theorem]{Fact}
\newtheorem{lemma}[theorem]{Lemma}
\newtheorem{corollary}[theorem]{Corollary}
\newtheorem{defi}[theorem]{Definition}
\newenvironment{emdef}{\begin{defi} \rm}{ \end{defi}}
\newtheorem{exa}[theorem]{Example}
\newenvironment{remark}{\begin{rem} \rm}{ \end{rem}}
\newtheorem{rem}[theorem]{Remark}
\DeclareMathOperator{\range}{range}
\DeclareMathOperator{\Id}{Id}
\newcommand{\set}[1]{\{{#1}\}}
\newcommand{\rel}[1]{\mathrel{#1}}
\newcommand{\QA}[1]{\forall{#1}\,}
\newcommand{\QE}[1]{\exists{#1}\,}
\newcommand{\ol}[1]{\overline{#1}}
\title[Word problems and ceers]{Word problems and ceers}
\author[V.~Delle Rose]{Valentino Delle Rose}
\address{Dipartimento di Ingegneria Informatica e Scienze Matematiche\\
Universit\`a Degli Studi di Siena\\
I-53100 Siena, Italy}\email{\href{mailto:valentin.dellerose@student.unisi.it}{valentin.dellerose@student.unisi.it}}
\author[L.~San Mauro]{Luca San Mauro}
\address{Institute of Discrete Mathematics and Geometry, Vienna University of Technology, Vienna, Austria}
\email{\href{mailto:luca.san.mauro@tuwien.ac.at}{luca.san.mauro@tuwien.ac.at}}
\author[A.~Sorbi]{Andrea Sorbi}
\address{Dipartimento di Ingegneria Informatica e Scienze Matematiche\\
Universit\`a Degli Studi di Siena\\
I-53100 Siena, Italy}
\email{\href{mailto:andrea.sorbi@unisi.it}{andrea.sorbi@unisi.it}}
\keywords{Word problems, computably enumerable structures,
computably enumerable equivalence relations, computable reducibility.}
\thanks{San Mauro was supported
by the Austrian Science Fund FWF, project M 2461. Sorbi is a member of
INDAM-GNSAGA, and
was partially supported by PRIN 2017 Grant ``Mathematical Logic: models, sets,
computability''.\\
The authors wish to thank an anonymous referee for valuable
comments and corrections which have greatly contributed to improve the paper,
and for having suggested Theorem~\ref{thm:fundamental}, and its consequent
Corollary~\ref{cor:solution}.}
\subjclass[2010]{03D40, 03D25}
\begin{document}

\begin{abstract}
This note addresses the issue as to which ceers can be realized by word
problems of computably enumerable (or, simply, c.e.) structures (such as
c.e.~semigroups, groups, and rings), where being realized means to fall in
the same reducibility degree (under the notion of reducibility for
equivalence relations usually called ``computable reducibility''), or in
the same isomorphism type (with the isomorphism induced by a computable
function), or in the same strong isomorphism type (with the isomorphism
induced by a computable permutation of the natural numbers). We observe for
instance that every ceer is isomorphic to the word problem of some
c.e.~semigroup, but (answering a question of Gao and Gerdes) not every ceer
is in the same reducibility degree of the word problem of some finitely
presented semigroup, nor is it in the same reducibility degree of some
non-periodic semigroup. We also show that the ceer provided by provable
equivalence of Peano Arithmetic is in the same strong isomorphism type as
the word problem of some non-commutative and non-Boolean c.e.~ring.
\end{abstract}

\maketitle

\section{Introduction}
Computably enumerable equivalence relations, or ceers, have been an active
field of research in recent years.  A great deal of the interest in ceers
certainly is due to the fact that they appear quite often in mathematical
logic (where they appear, for instance, as the relations of provable
equivalence in formal systems), and in general mathematics and computer
science where they appear as word problems of effectively presented familiar
algebraic structures. An important example in this sense is the word problem
for finitely presented (or, f.p.) groups. If $\langle X; R\rangle$ is a
f.p.~group and one codes the universe of the free group $F_X$ on $X$ with
$\omega$, then the \emph{word problem} of the group is the ceer that
identifies two elements $x,y \in F_X$ if $xy^{-1}$ lies in the normal
subgroup of $F_X$ generated by the relators appearing in the presentation $R$
of the group. The word problem of a f.p.~group can be decidable (i.e.~the
corresponding ceer is decidable), but also undecidable, and in fact can be of
any c.e.~Turing degree, or even $m$-degree: this was obtained independently
by Fridman~\cite{Fridman1962}, Clapham~\cite{clapham1964finitely} and
Boone~\cite{Boone1966a,Boone1966b,Boone1971} (despite the difference in
publication dates, the work of these authors was essentially simultaneous).

Of course not every ceer can be the word problem of a f.p.~group, or even of
a computably enumerable (c.e.) group, see Definition~\ref{def:c.e.algebras}
below. For instance, the equivalence classes of the word problem of a
c.e.~group are uniformly computably isomorphic with each other: to show that
the equivalence class of $u$ is isomorphic to the equivalence class of $v$,
just use the mapping $x \mapsto xu^{-1}v$. Therefore no ceer having both
finite classes and infinite classes, or even having at least two classes of
different $m$-degree, can be the word problem of a group. Therefore the
question naturally arises as to which ceers can be identified as word
problems not only of groups, but of other familiar computably enumerable
structures, modulo several ways of ``identifying'' equivalence relations,
based on natural measures of their relative complexity. The present paper is
meant to be a contribution to this line of research.

We first need of course to specify what we mean by ``computably enumerable
structures'' and their ``word problems'', and how we intend to measure the
relative complexity of equivalence relations.

\subsection{C.e.~algebras}
Following the tradition of Mal'cev and Rabin, it is common to postulate that
the complexity of the problem of presenting the particular copy of a
structure is captured by its atomic diagram. Yet, in algebra one naturally
deals with structures whose algebraic structure is easy to describe but it is
hard to know whether two terms represent the same element. The paradigmatic
example of this phenomenon is the construction, independently due to Boone
\cite{Boone1959} and Novikov \cite{Novikov}, of a finitely presented group
with an undecidable word problem. Moreover, the first homomorphism theorem
ensures that every countable algebra arises as the quotient of the term
algebra on countably many generators. So a countable algebra can
\emph{always} be represented in a way in which the complexity of the
structure is  entirely encoded in its word problem. This motivates the idea,
often recurring in the literature, of looking at c.e.~structures as given by
quotienting  $\omega$ modulo a ceer. In this paper, we will only be concerned
with structures that are algebras.

We recall that a \emph{type of algebras} is a set $\tau$ of function symbols,
such that each member  $f \in \tau$ is assigned a natural number $n$, called
the \emph{arity} of $f$. An \emph{algebra of type $\tau$} is a pair
$\mathcal{A}=\langle A, F\rangle$, where $A$ is a nonempty set, and $F$ is a
set of operations on $A$ \emph{interpreting the type}, i.e.~in one-to-one
correspondence with the function symbols in $\tau$, so that $n$-ary function
symbols of $\tau$ correspond to $n$-ary operations in $F$.

\begin{emdef}\label{def:c.e.algebras}
An algebra $\mathcal{A}$ of decidable type $\tau$, is \emph{computably
enumerable} (or, simply, \emph{c.e.}) if there is a triple $\mathcal{A}^-
=\langle \omega, F, E\rangle$ (called  a \emph{positive~presentation of
$\mathcal{A}$}) such that: (1) $F$ consists of computable operations on
$\omega$ interpreting the type $\tau$; (2) $E$ is a ceer,  which is also a
congruence with respect to the operations in $F$; (3) the quotient
$\mathcal{A}_{E}=\langle \omega_{E}, F_{E} \rangle$ (called a \emph{positive
copy of $\mathcal{A}$}) is isomorphic with $\mathcal{A}$, where $F_{E}=
\{f_{E}: f \in F\}$, with $f_{E}([x]_{E})=[f(x)]_{E}$.
\end{emdef}

For a thorough and clear introduction to c.e.~structures see Selivanov's
paper~\cite{Selivanov}, where they are called \emph{positive structures}, and
Koussainov's tutorial~\cite{Bakh}.

We will consider c.e.~algebras $\mathcal{A}=\langle A, F \rangle$ given by
some positive~presentation $\mathcal{A}^-=\langle \omega, F^-, E \rangle$,
and we will work directly with the positive~presentation rather than the
algebra itself. Thus, if we say $a \in A$ we in fact mean any $a^- \in
\omega$ such that $[a^-]_{E}=a$. The ceer $E$ will be often denoted also by
$=_\mathcal{A}$, as it yields equality in the quotient algebra.

\begin{emdef}\label{def:wp0}
The \emph{word problem} of a c.e.~algebra $\mathcal{A}$ is the ceer
$=_\mathcal{A}$.
\end{emdef}

Given a ceer $E$ (having possibly some interesting computational property) it
is natural to ask which algebras $\mathcal{A}$ can be positively presented
having $E$ as their equality relation $=_{\mathcal{A}}$ (see, e.g.,
\cite{gavruskin2014graphs,fokina2016linear}). Surprisingly, much less is
known about the reverse problem, namely, given a class of structures
$\mathfrak{C}$, which ceers are ``realized'' by members of $\mathfrak{C}$?
This is the main topic of our paper. But, of course, we still need to give a
rigorous definition of what we mean by a structure ``realizing'' a ceer.

\subsection{Measures of the relative complexity of equivalence relations}
The most useful and popular way of measuring  the relative complexity of
ceers has been (at least in recent years: e.g., see
\cite{Gao-Gerdes,ceers,joinmeet,fokina2019measuring}) via the following
notion of reducibility.

\begin{emdef}\label{def:reducibility}
Given a pair of equivalence relations $R,S$ on $\omega$ we say that $R$ is
\emph{computably reducible to} $S$  ($R \leq S$) if there exists a computable
function $f$ such that
\[
(\forall x,y)[x \rel{R} y \Leftrightarrow f(x) \rel{S} f(y)].
\]
\end{emdef}
In the rest of the paper ``computable reducibility'' will be simply referred
to as ``reducibility''. This leads to identifying two equivalence relations
$R,S$, if they both belong to the same reducibility \emph{degree}, i.e.\
$R\leq S$ and $S \leq R$.

In this paper we will consider two additional ways of comparing equivalence
relations based on the notion of ``isomorphism''.

If $R$ is an equivalence relation on $\omega$ then for every number $x$ we
denote by $[x]_R$ the $R$-equivalence class of $x$; the collection of all
$R$-equivalence classes is denoted by $\omega_R$.

\begin{emdef}\label{def:isomorphism}
Given ceers $R,S$, we say that $R$ and $S$ are \emph{isomorphic} (notation $R
\simeq S$) if there is a reduction $f:R \rightarrow S$ such that the range of
$f$ intersects all $S$-equivalence classes. We say in this case that $f$
\emph{induces an isomorphism} from $R$ to $S$.
\end{emdef}

The choice of the name ``isomorphism'' is justified by
Lemma~\ref{lem:characterization} below. Following the category theoretic
approach to numberings proposed by Ershov~\cite{Ershov-russian:Book},
equivalence relations on $\omega$ can be structured as objects of a category
(see also \cite{cat-eq-rel}). The lemma shows in fact that, when restricting
attention only to equivalence relations that are ceers, two objects are
isomorphic in the category theoretic sense if and only if they  are
isomorphic in the sense of our Definition~\ref{def:isomorphism}.

\begin{lemma}[Inversion Lemma]\label{lem:characterization}
If $R,S$ are ceers then $f$ induces an isomorphism from $R$ to $S$ if and
only if $f$ has an equivalence inverse, i.e.\ there is a reduction $g:S\leq
R$ such that $g(f(x)) \rel{R} x$, and $f(g(x)) \rel{S} x$, for all $x\in
\omega$.
\end{lemma}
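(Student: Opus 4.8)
The plan is to prove both directions by direct construction. The "if" direction is essentially a formality: suppose $f:R\le S$ has an equivalence inverse $g:S\le R$ with $g(f(x))\rel{R}x$ and $f(g(x))\rel{S}x$ for all $x$. I would show that $\range(f)$ meets every $S$-class. Given an arbitrary $z\in\omega$, consider $g(z)\in\omega$; then $f(g(z))\rel{S}z$, so $f(g(z))$ lies in $[z]_S$. Hence $[z]_S$ is hit by $\range(f)$, and since $z$ was arbitrary, $f$ induces an isomorphism from $R$ to $S$ in the sense of Definition~\ref{def:isomorphism}. This direction uses only the second commutation identity and does not need that $R,S$ are ceers.

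For the "only if" direction — the substantive part — assume $f$ induces an isomorphism from $R$ to $S$, so $f$ is a reduction and $\range(f)$ meets every $S$-class. I want to build a computable $g:S\le R$ satisfying the two commutation conditions. The idea is: given an input $y$, search for some $x$ with $f(x)\rel{S}y$ and output (a fixed choice of) such $x$. Since $\range(f)$ meets $[y]_S$, such an $x$ exists. The key point where the ceer hypothesis enters is that $S$ is c.e., so the relation $f(x)\rel{S}y$ is c.e. in $(x,y)$; therefore the search $g(y):=\mu x.\,(f(x)\rel{S}y)$ terminates and defines a total computable function. It remains to verify the two identities. First, $f(g(y))\rel{S}y$ holds by the defining property of $g$, for every $y$ — this is immediate. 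Second, I must check $g(f(x))\rel{R}x$: by definition $g(f(x))$ is some $x'$ with $f(x')\rel{S}f(x)$; since $f$ is a reduction of $R$ to $S$, $f(x')\rel{S}f(x)$ implies $x'\rel{R}x$, i.e. $g(f(x))\rel{R}x$, as desired.

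I expect the main (minor) obstacle to be purely one of bookkeeping: making sure the unbounded search defining $g$ is genuinely total, which is exactly where c.e.-ness of $S$ is used — without it the set $\{x : f(x)\rel{S}y\}$ need not be c.e. and the $\mu$-operator could fail to be computable. One could instead dovetail an enumeration of $S$ together with the graph of $f$; either way the content is the same. Note that the "reduction" direction of the two commutations ($g(f(x))\rel{R}x$) crucially needs $f$ to be a reduction in the strong biconditional sense of Definition~\ref{def:reducibility} — the forward implication $x\rel{R}y\Rightarrow f(x)\rel{S}f(y)$ alone would not suffice; we need the reverse implication to pull $f(x')\rel{S}f(x)$ back to $x'\rel{R}x$. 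Everything else is routine, and the lemma follows.
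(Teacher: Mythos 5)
Your argument is correct, and it is the standard one; note that the paper itself gives no proof here but simply cites Lemma~1.1 of \cite{joinmeet}, so your proposal in effect supplies the argument that the paper outsources. The ``if'' direction is exactly as you say, and in the ``only if'' direction your construction of $g$ by dovetailing an enumeration of $S$ against the (computable) graph of $f$ is the right fix for the non-computability of the literal $\mu$-operator, which you correctly flag as the only delicate point. One small item you leave implicit: the lemma asserts that $g$ is a \emph{reduction} $S\leq R$, not merely a computable function satisfying the two commutation identities, so you should add the one-line check that $y\rel{S}z$ iff $g(y)\rel{R}g(z)$ --- this follows because $f(g(y))\rel{S}y$ and $f(g(z))\rel{S}z$ give $y\rel{S}z\Leftrightarrow f(g(y))\rel{S}f(g(z))\Leftrightarrow g(y)\rel{R}g(z)$, using the biconditional in the definition of reduction for $f$. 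With that line added the proof is complete.
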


\begin{proof}
By \cite[Lemma~1.1]{joinmeet}.
\end{proof}

\begin{emdef}
We say that $R$ and $S$ are \emph{strongly isomorphic} if there is a
computable permutation $f$ of $\omega$ providing a reduction $f:R \rightarrow
S$. We say in this case that $f$ \emph{induces} the strong isomorphism.
\end{emdef}

Trivially, if $f$ induces a strong isomorphism from $R$ to $S$ then it also
induces an isomorphism from $R$ to $S$.

It is also clear that if $R$ has at least one finite class and $S$ has only
infinite classes then $R$ and $S$ cannot be strongly isomorphic. On the other
hand:

\begin{lemma}\label{lem:all-infinite}
For every ceer $R$ there exists a ceer $S$ having only infinite classes and
such that $R \simeq S$.
\end{lemma}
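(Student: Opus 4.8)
The plan is to take $R$ and "pad" every equivalence class by attaching a fresh infinite set of new points to it, in a computable and injective way, so that the resulting relation $S$ has only infinite classes and $R \simeq S$ via the obvious inclusion-type map.

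Concretely, I would fix a computable bijection $\omega \cong \omega \times \omega$, write elements of the new domain as pairs $\langle x, n\rangle$, and define $S$ by $\langle x, n\rangle \mathrel{S} \langle y, m\rangle \iff x \mathrel{R} y$. Then $S$ is a ceer: it is an equivalence relation because $R$ is, and it is c.e. because from a c.e. enumeration of $R$ one uniformly enumerates all pairs $(\langle x,n\rangle, \langle y,m\rangle)$ with $(x,y)$ appearing in $R$. Every $S$-class is $\{\langle y,m\rangle : y \mathrel{R} x, m \in \omega\}$, which is infinite since the "second coordinate" slice $\{\langle x,m\rangle : m\in\omega\}$ alone is already infinite. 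The map $f(x) = \langle x, 0\rangle$ is computable and satisfies $x \mathrel{R} y \iff f(x) \mathrel{S} f(y)$ by definition of $S$, so $f : R \leq S$. Finally, $f$ meets every $S$-class: given any $\langle y, m\rangle$, we have $f(y) = \langle y, 0\rangle \mathrel{S} \langle y, m\rangle$. Hence by Definition~\ref{def:isomorphism}, $f$ induces an isomorphism $R \simeq S$, and $S$ has only infinite classes, as required.

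There is essentially no obstacle here; the only points requiring a line of care are (i) verifying that $S$ is genuinely c.e. rather than merely $\Pi^0_1$ or worse — but this is immediate from the projection-style definition and closure of c.e. sets under computable preimages — and (ii) confirming that $f$ hits every $S$-class, which is where the choice of a definition of $S$ that does not depend on the second coordinate pays off. One could equally well phrase the construction as $S = R \times \Delta$ where $\Delta$ is the identity-collapsing-to-one relation on $\omega$ (i.e.\ the single-class equivalence relation), but the explicit pairing description above is the cleanest to check directly against Definition~\ref{def:isomorphism}.
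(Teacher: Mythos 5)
Your construction is identical to the paper's: the paper defines $S$ by $x \mathrel{S} y \Leftrightarrow (x)_0 \mathrel{R} (y)_0$ using the Cantor pairing function, which is exactly your padded relation. The only cosmetic difference is that the paper exhibits the isomorphism in the direction $S \to R$ via the first projection, while you exhibit it as $R \to S$ via $x \mapsto \langle x,0\rangle$; by the Inversion Lemma these are equivalent, and your verification is correct.
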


\begin{proof}
Let $\langle\_,\_\rangle$ be the Cantor pairing function, and let $(\_)_0$ be
its first projection. Given $R$, let $S$ be such that
\[
x \mathrel{S} y \Leftrightarrow (x)_0 \mathrel{R} (y)_0.
\]
As is immediate to see, the computable function $( \_ ) _0$ induces an
isomorphism from $S$ to $R$, since it provides a reduction whose range
intersects all $R$-equivalence classes.
\end{proof}

We summarize the various definitions, and introduce suitable notations for
them.

\begin{emdef}\label{def:comparing-definitions}
If $R,S$ are ceers, we say that
\begin{itemize}
  \item $R$ is \emph{bi-reducible with $S$} (notation $R \equiv S$) if $R
      \leq S$ and $S \leq R$;

   \item $R$ is \emph{isomorphic to $S$} (notation: $R\simeq S$) if there
      is a reduction $f:R \rightarrow S$  such that $\range(f) \cap [x]_{S}
      \ne \emptyset$, for all $x$;

  \item $R$ is \emph{strongly isomorphic to $S$} (notation:
      $R\simeq_{\textrm{s}} S$) if there is a computable permutation
      of $\omega$ reducing $R$ to $S$.
\end{itemize}
\end{emdef}

The following is a useful observation:

\begin{fact}\label{fact:iso-infinite}
If $R,S$ are ceers such that all $R$-classes and all $S$-classes are infinite
then
\[
R \simeq_{\textrm{s}} S \Leftrightarrow R \simeq S.
\]
\end{fact}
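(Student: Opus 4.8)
Since any computable permutation reducing $R$ to $S$ is in particular a reduction whose range is all of $\omega$, the implication $R \simeq_{\textrm{s}} S \Rightarrow R \simeq S$ is immediate (and was already noted above). So the plan is to prove the converse: assuming $R \simeq S$ with all classes of both relations infinite, I would construct a computable permutation $h$ of $\omega$ that reduces $R$ to $S$.

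First I would fix a computable $f$ inducing an isomorphism from $R$ to $S$, and invoke the Inversion Lemma (Lemma~\ref{lem:characterization}) to fix a computable equivalence inverse $g\colon S\to R$, so that $g(f(x)) \mathrel{R} x$ and $f(g(x)) \mathrel{S} x$ for all $x$. Then I would build $h$ by a stagewise back-and-forth construction, committing $h$ on finitely many arguments at each stage and never revising, while maintaining the invariant that the partial map built so far is injective and satisfies $h(k) \mathrel{S} f(k)$ for every $k$ in its (finite) domain. Note that this invariant already forces the reduction property on the current domain: for $k,k'$ in the domain, $h(k) \mathrel{S} h(k') \Leftrightarrow f(k) \mathrel{S} f(k') \Leftrightarrow k \mathrel{R} k'$.

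The construction alternates two kinds of steps. In a \emph{forth} step, to put a given $n$ into the domain (if it is not there yet), I search for some $m \notin \range(h)$ with $m \mathrel{S} f(n)$ and set $h(n)=m$; such an $m$ exists and is found effectively because $\{x : x \mathrel{S} f(n)\}$ is an infinite c.e.\ set while $\range(h)$ is finite, and the invariant is preserved since $h(n)\mathrel{S}f(n)$. In a \emph{back} step, to put a given $n$ into the range (if it is not there yet), I search for some $j \notin \dom(h)$ with $j \mathrel{R} g(n)$ and set $h(j)=n$; again $\{x : x \mathrel{R} g(n)\}$ is an infinite c.e.\ set and $\dom(h)$ is finite, so the search halts, $j$ is fresh for the domain and $n$ is fresh for the range (so injectivity is kept), and the invariant holds because $f(j) \mathrel{S} f(g(n)) \mathrel{S} n$ by the choice of $g$. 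Dovetailing forth steps over all $n$ makes $h$ total, dovetailing back steps over all $n$ makes $h$ surjective, injectivity is maintained throughout, and the invariant gives that $h$ is a reduction; hence $h$ is a computable permutation witnessing $R \simeq_{\textrm{s}} S$.

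The only real obstacle is choosing the right invariant. The naive back-and-forth that merely keeps a partial two-sided reduction founders on the fact that $k \not\mathrel{R} n$ is not semidecidable, so one cannot effectively decide which old points the new image must, or must not, be $S$-equivalent to. Carrying the stronger condition $h(k) \mathrel{S} f(k)$ sidesteps this entirely: it makes the reduction property automatic and lets $f$ (on forth steps) and $g$ (on back steps) aim the search at a specific infinite $S$- or $R$-class, inside which a fresh witness is guaranteed to exist and to be found. Everything else is routine bookkeeping.
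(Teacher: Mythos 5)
Your proof is correct and follows exactly the route the paper takes: the paper disposes of the nontrivial direction by appealing to ``a straightforward back-and-forth argument similar to the one used in the proof of the Myhill Isomorphism Theorem'' (citing the literature), and your argument is precisely that back-and-forth, written out in full, with the key invariant $h(k)\rel{S}f(k)$ correctly chosen to avoid the non-semidecidability of $\cancel{\rel{R}}$ and with the infinitude of all classes used exactly where needed to keep the searches terminating.
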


\begin{proof}
The nontrivial implication $R \simeq S \Rightarrow R \simeq_{\textrm{s}} S$
follows by a straightforward back-and-forth argument similar to the one used
in the proof of the Myhill Isomorphism Theorem. See for instance \cite[Remark
1.2]{Andrews-Badaev-Sorbi} and \cite[Lemma 2.3]{KazakhPaper}.
\end{proof}

\begin{fact}\label{fact:compact}
The following proper implications hold on ceers $R,S$:
\[
R \simeq_{\textrm{s}} S \Rightarrow R \simeq S \Rightarrow R \equiv S.
\]
\end{fact}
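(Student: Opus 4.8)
Both implications are immediate. If a computable permutation reduces $R$ to $S$ then, being surjective, it is a reduction whose range meets every $S$-class, so $R\simeq_{\textrm{s}}S\Rightarrow R\simeq S$ (this was already remarked after the definition of strong isomorphism). If $f$ induces an isomorphism from $R$ to $S$ then $f$ witnesses $R\le S$, and its equivalence inverse $g$, provided by the Inversion Lemma, witnesses $S\le R$; hence $R\simeq S\Rightarrow R\equiv S$, and of course $R\equiv S$ is just $R\le S$ together with $S\le R$. So the content of the statement is that neither implication reverses.

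For the first, this is immediate from Lemma~\ref{lem:all-infinite}: taking $R=\Id$, we obtain a ceer $S$ with only infinite classes and $\Id\simeq S$; since $\Id$ has finite classes while $S$ has none, $\Id\not\simeq_{\textrm{s}}S$, as noted before Lemma~\ref{lem:all-infinite}.

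For the second, the plan is to produce a ceer $E$ such that (a) every $E$-class is infinite, (b) no $E$-class is computable, and (c) there is a reduction $h\colon E\to E$ whose range misses some $E$-class entirely. Let $E^{+}$ be the ceer obtained by adjoining to $E$ one further infinite class, coded so that $E$ lives on the even numbers and the new class is the (computable) set of odd numbers. Then $E\le E^{+}$ trivially, while $E^{+}\le E$ via the map sending $2m\mapsto h(m)$ and sending the whole new class to a fixed element of the $E$-class missed by $h$; hence $E\equiv E^{+}$. On the other hand $E\not\simeq E^{+}$: both ceers have only infinite classes, so by Fact~\ref{fact:iso-infinite} an isomorphism would yield a computable permutation $\pi$ of $\omega$ reducing $E$ to $E^{+}$, and $\pi$ induces a bijection between the $E$-classes and the $E^{+}$-classes; thus some $E$-class is carried onto the new class of $E^{+}$, i.e.\ equals $\pi^{-1}$ of the set of odd numbers, and is therefore computable, contradicting (b).

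The substantive step is the construction of such an $E$. One concrete route is $E=\bigoplus_{i\in\omega}F$ for a ceer $F$ all of whose classes are infinite and none of which is computable: then every $E$-class is infinite and non-computable, and the index shift $\langle i,x\rangle\mapsto\langle i+1,x\rangle$ serves as the reduction $h$ in (c). Producing $F$ itself — a c.e.\ equivalence relation with infinitely many classes, each infinite and with non-c.e.\ complement, obtained by splitting $\omega$ into infinitely many infinite c.e.\ pieces and diagonalizing so that no c.e.\ set is the complement of a piece — is where the real work lies; alternatively one may invoke a known ceer with these features from the ceers literature. Arranging (b) and (c) simultaneously is what I expect to be the main obstacle; the rest of the argument above is routine bookkeeping.
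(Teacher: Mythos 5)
Your handling of the two implications and of the non-reversibility of $\simeq_{\textrm{s}}\Rightarrow\simeq$ matches the paper's proof: the paper likewise pairs a ceer having a finite class with its all-infinite-classes companion from Lemma~\ref{lem:all-infinite}. For the non-reversibility of $\simeq\Rightarrow\equiv$ you take a genuinely different and much heavier route. The paper's counterexample is a one-liner: take a universal ceer $E$ all of whose classes are undecidable (it names $\sim_{PA}$ as a concrete instance); then $E\oplus\Id_{1}$ is universal as well, so $E\equiv E\oplus\Id_{1}$ comes for free, while $E\not\simeq E\oplus\Id_{1}$ because an isomorphism $f$ would make some $E$-class equal to the $f$-preimage of the decidable class contributed by $\Id_{1}$. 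Universality does the work that your range-missing self-reduction $h$ and your hand-built reduction $E^{+}\leq E$ do explicitly; in exchange, your argument needs no universality at all, only a ceer with infinitely many classes, all infinite and all undecidable, together with the shift trick on $\bigoplus_{i}F$. Your verification that $E\equiv E^{+}$, and that $E\not\simeq E^{+}$ via Fact~\ref{fact:iso-infinite} and the computability of $\pi^{-1}$ of the odd numbers, is correct.

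The one genuine gap is the ingredient you flag yourself: you never produce the ceer $F$ with infinitely many classes, each infinite and non-computable. The gap is real but easy to close, and the paper hands you the object inside this very proof: $\sim_{PA}$ has all equivalence classes undecidable (\cite[Example~2]{BS}) and all classes infinite, since every sentence is provably equivalent to infinitely many syntactically distinct sentences. Taking $F=\sim_{PA}$ and $E=\bigoplus_{i}F$ completes your construction, so the diagonalization you anticipate as ``the main obstacle'' is unnecessary; with that citation supplied, your proof is sound, just longer than it needs to be.
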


\begin{proof}
The proof follows easily from known facts in the literature, and a few other
obvious observations. We have already observed that any computable function
inducing a strong isomorphism induces also an isomorphism. So
$\simeq_{\mathrm{s}} $ implies $\simeq$. Next, suppose that $f: R\rightarrow
S$ induces an isomorphism. Then $f$ is already a reduction from $R$ to $S$
giving $R\leq S$.  But $f$ has an equivalence-inverse reduction $g: S \leq
R$, so $S\leq R$ as well. In conclusion $R\equiv S$, and thus $\simeq$
implies $\equiv$.

Let us now show through a few examples that the implications are proper. It
is known that there are universal ceers $E$ (which therefore are reducible to
each other: we recall that a ceer $R$ is universal, if $S \leq R$ for every
ceer $S$), whose equivalence classes are all undecidable. This is the case
for instance of precomplete and u.f.p. ceers (for these notions and their
properties,  see for instance the survey paper~\cite{Andrews-Badaev-Sorbi}):
a concrete example of a universal ceer with undecidable equivalence classes
(\cite[Example~2]{BS}) is the ceer $\sim_{PA}$ induced by provable
equivalence of Peano Arithmetic: see  Section~\ref{sct:last}.  Now if $E$ is
such a universal ceer then $E \oplus \Id_{1}$ (that is, the ceer $\{(2x,2y):
x \rel{E} y\} \cup \{(2x+1,2y+1): x,y \in \omega\}$) is universal too, thus
$E \equiv E\oplus \Id_{1}$, but $E \not\simeq E\oplus \Id_{1}$ since that
latter ceer has one decidable equivalence class.

Finally, take $R$ to be any ceer with at least one finite equivalence class,
and let $S$ be the ceer built from $R$ as in the proof of
Lemma~\ref{lem:all-infinite}. Thus $R \simeq S$, but $R
\not{\simeq}_{\mathrm{s}} S$, as $R$ has at least a finite equivalence class
whereas all $S$-equivalence classes are infinite.
\end{proof}

\begin{emdef}
If $R$ is a ceer, $\mathcal{A}$ is a c.e.~algebra, and $\approx\
\in\{\equiv,\simeq, \simeq_{\textrm{s}}\}$ then we say that $R$ is
\emph{$\approx$-realized by $\mathcal{A}$} if $R \approx =_\mathcal{A}$.
(Recall that $=_\mathcal{A}$ denotes the word problem of $\mathcal{A}$.)
\end{emdef}

\begin{emdef}
A class $\mathfrak{C}$ of algebras of the same type is
\emph{$\approx$-complete for a class $\mathbb{C}$ of ceers} (where $\approx\
\in\{\equiv,\simeq, \simeq_{\textrm{s}}\}$) if every ceer in $\mathbb{C}$ is
$\approx$-realized by some c.e.~copy of an algebra from $\mathfrak{C}$. We
simply say that $\mathfrak{C}$ is \emph{$\approx$-complete for the ceers} if
$\mathfrak{C}$ is $\approx$-complete for the class of all ceers.
\end{emdef}

\begin{corollary}\label{cor:ante}
If $\mathbb{C}$  is a class of ceers all of whose members have no finite
equivalence classes, and $\mathfrak{C}$ is a class of algebras then
\[
\mathfrak{C} \textrm{ $\simeq$-complete for $\mathbb{C}$} \Leftrightarrow
\mathfrak{C} \textrm{ $\simeq_{\textrm{s}}$-complete for $\mathbb{C}$}.
\]
\end{corollary}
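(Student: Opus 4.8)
The plan is to prove the two implications separately, the right-to-left one being essentially free. If $\mathfrak{C}$ is $\simeq_{\textrm{s}}$-complete for $\mathbb{C}$ and $R\in\mathbb{C}$, then $R\simeq_{\textrm{s}}{=_{\mathcal{A}}}$ for some c.e.\ copy $\mathcal{A}$ of an algebra from $\mathfrak{C}$, and Fact~\ref{fact:compact} already gives $R\simeq{=_{\mathcal{A}}}$, so $R$ is $\simeq$-realized by $\mathcal{A}$; note this direction uses nothing about finite classes. The whole content of the corollary is therefore the left-to-right implication.

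So assume $\mathfrak{C}$ is $\simeq$-complete for $\mathbb{C}$ and fix $R\in\mathbb{C}$, so that every $R$-class is infinite. By hypothesis there is a c.e.\ copy $\mathcal{A}$ of some $B\in\mathfrak{C}$ with $R\simeq{=_{\mathcal{A}}}$; put $E={=_{\mathcal{A}}}$ and fix a positive presentation $\mathcal{A}^-=\langle\omega,F,E\rangle$. The point to notice is that $\simeq$-completeness alone does not guarantee that $E$ has only infinite classes — an isomorphism of ceers need not preserve the cardinalities of the individual equivalence classes — so Fact~\ref{fact:iso-infinite} cannot be applied directly to $R$ and $E$. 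Instead I would first manufacture another positive presentation of a copy of $B$ whose word problem has only infinite classes, by rerunning the construction from the proof of Lemma~\ref{lem:all-infinite} while carrying the operations along. Concretely, using Cantor pairing $\langle\_,\_\rangle$ and its first projection $(\_)_0$, let $S$ be defined by $x\mathrel{S}y\Leftrightarrow(x)_0\mathrel{E}(y)_0$, and for each $h\in F$ of arity $n$ set $h'(x_1,\dots,x_n)=\langle h((x_1)_0,\dots,(x_n)_0),0\rangle$. Then $S$ is a ceer all of whose classes are infinite, each $h'$ is computable, $S$ is a congruence for each $h'$ (because $E$ is a congruence for $h$ and $S$ depends only on first projections), and $[x]_S\mapsto[(x)_0]_E$ is an isomorphism of $\langle\omega_S,\{h'_S:h\in F\}\rangle$ onto $\langle\omega_E,\{h_E:h\in F\}\rangle$, which is a copy of $B$. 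Hence $\langle\omega,\{h':h\in F\},S\rangle$ is a positive presentation of a c.e.\ copy $\mathcal{A}'$ of $B$ with ${=_{\mathcal{A}'}}=S$, and $(\_)_0$ is a reduction from $S$ to $E$ whose range meets every $E$-class, so $S\simeq E$.

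To finish I would chain the isomorphisms: since $\simeq$ is an equivalence relation on ceers (reflexivity and transitivity are immediate, symmetry follows from the Inversion Lemma), $R\simeq E$ and $E\simeq S$ give $R\simeq S={=_{\mathcal{A}'}}$; and as both $R$ and $S$ have only infinite classes, Fact~\ref{fact:iso-infinite} upgrades this to $R\simeq_{\textrm{s}}{=_{\mathcal{A}'}}$. Thus $R$ is $\simeq_{\textrm{s}}$-realized by $\mathcal{A}'$, a c.e.\ copy of an algebra from $\mathfrak{C}$, and since $R$ was an arbitrary member of $\mathbb{C}$, $\mathfrak{C}$ is $\simeq_{\textrm{s}}$-complete for $\mathbb{C}$. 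The step I expect to be the most delicate to write out is the verification that the Lemma~\ref{lem:all-infinite} construction genuinely lifts to the level of algebras — that the reinterpreted operations $h'$ stay computable, respect the new ceer $S$, and induce the very same quotient algebra — together with the preliminary observation that this lifting is actually needed, since $\simeq$-completeness need not by itself supply a copy with all classes infinite.
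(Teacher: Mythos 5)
Your proof is correct, and its core is the same as the paper's: reduce to Fact~\ref{fact:iso-infinite}. The paper's entire proof, however, is the single sentence ``It follows from Fact~\ref{fact:iso-infinite}'', which tacitly treats the realizing word problem as if it had only infinite classes. You are right that this is a genuine subtlety: an isomorphism of ceers in the sense of Definition~\ref{def:isomorphism} need not preserve the cardinalities of individual classes (e.g., the two-class ceer with classes the evens and the odds is isomorphic to the one with classes $\{0\}$ and $\omega\smallsetminus\{0\}$ via $x\mapsto x \bmod 2$), so $\simeq$-completeness alone does not hand you a copy to which Fact~\ref{fact:iso-infinite} applies. Your repair --- rerunning the Lemma~\ref{lem:all-infinite} blow-up while lifting the operations $h$ to $h'(x_1,\dots,x_n)=\langle h((x_1)_0,\dots,(x_n)_0),0\rangle$, checking that $S$ is a congruence for the $h'$ and that the quotient is the same algebra --- is exactly the missing step, and your verification of it is sound (as is the easy right-to-left direction via Fact~\ref{fact:compact} and the transitivity/symmetry of $\simeq$ via Lemma~\ref{lem:characterization}). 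So you have not taken a different route; you have written out, correctly, the argument the paper compresses into one line, including a step it really does need.
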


\begin{proof}
It follows from Fact~\ref{fact:iso-infinite}.
\end{proof}

Moreover, it is trivial to observe:

\begin{fact}
If $\mathbb{C}$ is a class of ceers and $\mathfrak{C}$ is a class of algebras
then
\begin{align*}
&\mathfrak{C} \textrm{ $\simeq$-complete for $\mathbb{C}$} \Rightarrow
\mathfrak{C} \textrm{ $\equiv$-complete for $\mathbb{C}$},\\
&\mathfrak{C} \textrm{ $\simeq_{\textrm{s}}$-complete for $\mathbb{C}$} \Rightarrow
\mathfrak{C} \textrm{ $\simeq$-complete for $\mathbb{C}$}.
\end{align*}
\end{fact}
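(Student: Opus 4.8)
The plan is to simply unwind the definitions of $\approx$-completeness and invoke the chain of implications established in Fact~\ref{fact:compact}, so that each witnessing algebra for the finer notion of realization also witnesses the coarser one.

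First I would fix an arbitrary ceer $R \in \mathbb{C}$ and assume that $\mathfrak{C}$ is $\simeq$-complete for $\mathbb{C}$. By definition this yields a c.e.~copy $\mathcal{A}$ of some algebra from $\mathfrak{C}$ such that $R \simeq\, =_\mathcal{A}$. I would then apply the implication $\simeq\, \Rightarrow\, \equiv$ of Fact~\ref{fact:compact} to conclude $R \equiv\, =_\mathcal{A}$, i.e.~that $R$ is $\equiv$-realized by the same algebra $\mathcal{A}$. Since $R$ was an arbitrary member of $\mathbb{C}$, this shows that $\mathfrak{C}$ is $\equiv$-complete for $\mathbb{C}$, which is the first implication.

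The second implication is handled in exactly the same way, replacing $\simeq$ by $\simeq_{\textrm{s}}$ and $\equiv$ by $\simeq$ throughout, and using instead the implication $\simeq_{\textrm{s}}\, \Rightarrow\, \simeq$ of Fact~\ref{fact:compact}: from a c.e.~copy $\mathcal{A}$ with $R \simeq_{\textrm{s}}\, =_\mathcal{A}$ one immediately gets $R \simeq\, =_\mathcal{A}$, so the same $\mathcal{A}$ witnesses $\simeq$-completeness.

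I do not expect any genuine obstacle: the statement is pure bookkeeping over the quantifier ``for every ceer in $\mathbb{C}$ there is a c.e.~copy of an algebra from $\mathfrak{C}$ realizing it'', combined with the observation that a stronger relation between $R$ and $=_\mathcal{A}$ entails a weaker one. The only point worth making explicit is that the \emph{same} witnessing algebra $\mathcal{A}$ transfers, since we weaken only the relation linking $R$ to $=_\mathcal{A}$ and not the choice of algebra.
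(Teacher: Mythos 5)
Your proof is correct and is essentially the paper's own argument: the paper likewise derives both implications directly from Fact~\ref{fact:compact}, with the same observation that the witnessing algebra carries over unchanged. Nothing to add.
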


\begin{proof}
The proof follows from Fact~\ref{fact:compact}.
\end{proof}

\section{Classes of algebras that are complete for the ceers}

We now begin to look at some natural classes of c.e.~algebras in relation to
the problem of $\approx$-completeness for ceers, with $\approx\
\in\{\equiv,\simeq, \simeq_{\textrm{s}}\}$.  Our examples of c.e.~algebras
will be more conveniently introduced via the notion of a computably
enumerable presentation. In a variety of algebras with finite or countable
type, if the\emph{ term algebra}  $T(X)$ on a finite or countable  set $X$
(see e.g. \cite[\textsection10]{Burris-Sankappanavar}) exists (existence is
guaranteed if, as in our future examples, $X$ is nonempty) then, up to
isomorphisms, $T(X)$ can be presented as a computable algebra: we may assume
that $X$ is decidable, $T(X)$ has decidable universe (which is infinite in
all our examples), computable operations, and equality is syntactic equality.
If, in addition the identities of the variety form a c.e.~binary relation on
$T(X)$, then we have the following definition.

\begin{emdef}\label{def:semigroup-cp}
In a variety as above, a \emph{c.e.~presentation} is a pair
$\mathcal{A}=\langle X; R \rangle$ where $X$ is a set, $R$ is a binary
relation on  $T(X)$, and $\mathcal{A}$ denotes the quotient algebra
$T(X)_{/N_{R}}$, where $N_R$ is the c.e.~congruence on $T(X)$ generated by
$R$ together with the identities of the variety.  An algebra $\mathcal{A}$ of
the variety is \emph{c.e.~presented} (\emph{c.e.p.}), if it is of the form
$\langle X; R \rangle$ as just described.

A special case  is provided by
\emph{finite presentations}, where both $X$ and $R$ are finite.
\end{emdef}

The following fact is well known:

\begin{lemma}\label{lem:equiv-wp}
In a variety as above, an algebra is c.e.~if and only if it is isomorphic to
some c.e.p.~algebra.
\end{lemma}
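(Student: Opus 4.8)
The plan is to prove both directions of the equivalence in Lemma~\ref{lem:equiv-wp}, working within a fixed variety whose term algebra $T(X)$ on a nonempty countable set $X$ can be computably presented and whose identities form a c.e.\ relation on $T(X)$.

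First I would handle the easy direction: if $\mathcal{A}$ is isomorphic to some c.e.p.\ algebra $\langle X; R\rangle$, then $\mathcal{A}$ is c.e. By Definition~\ref{def:semigroup-cp}, $\langle X; R\rangle$ denotes $T(X)_{/N_R}$, where $N_R$ is the congruence on $T(X)$ generated by $R$ together with the identities of the variety. Since the identities form a c.e.\ relation and $R$ is a (c.e., indeed in the finitely presented case finite) relation, the congruence $N_R$ generated by their union is c.e.: one enumerates $N_R$ by closing the given pairs under reflexivity, symmetry, transitivity, substitution instances (using the identities) and compatibility with the operations of the type — all of these closure conditions are effective because the type is decidable and the operations on $T(X)$ are computable. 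Fixing a computable bijection between the decidable universe of $T(X)$ and $\omega$, we transport the computable operations of $T(X)$ to computable operations $F$ on $\omega$ and transport $N_R$ to a ceer $E$ on $\omega$ which is a congruence for $F$; then $\langle \omega, F, E\rangle$ is a positive presentation of $\langle X;R\rangle$, hence of $\mathcal{A}$, so $\mathcal{A}$ is c.e.\ in the sense of Definition~\ref{def:c.e.algebras}.

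For the converse, suppose $\mathcal{A}$ is c.e., witnessed by a positive presentation $\mathcal{A}^- = \langle \omega, F, E\rangle$. I want to exhibit a c.e.\ presentation $\langle X; R\rangle$ with $T(X)_{/N_R} \cong \mathcal{A}$. The natural choice is to take $X = \omega$ as a set of generators, i.e.\ introduce one generator $c_n$ for each $n \in \omega$, and let $R$ consist of all pairs $(t_1, t_2)$ of terms in $T(X)$ such that, reading each generator $c_n$ as the element $n$ and evaluating the (computable) operations of $F$, the two terms evaluate to a pair of $E$-equivalent elements of $\omega$ — concretely, it suffices to put into $R$ the pairs $(f_X(c_{n_1},\dots,c_{n_k}),\, c_m)$ whenever $f \in F$ has arity $k$ and $f(n_1,\dots,n_k)\rel{E} m$, together with the pairs $(c_n, c_m)$ whenever $n \rel{E} m$. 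Because $F$ consists of computable operations and $E$ is a ceer, this $R$ is a c.e.\ relation on $T(X)$. One then checks that the map sending a term $t \in T(X)$ to the element of $\omega$ obtained by evaluating $t$ under $F$ (with $c_n \mapsto n$) descends to a well-defined algebra homomorphism $T(X)_{/N_R} \to \mathcal{A}_E$ which is surjective (every $n \in \omega$ is the image of the generator $c_n$) and injective (if two terms evaluate to $E$-equivalent elements, the relations placed in $R$ together with the identities of the variety force them to be $N_R$-equivalent, and conversely $N_R$ is contained in the kernel congruence since the latter contains $R$ and the variety identities and is a congruence). Hence $T(X)_{/N_R}\cong \mathcal{A}_E\cong\mathcal{A}$, so $\mathcal{A}$ is isomorphic to a c.e.p.\ algebra.

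The main obstacle, and the step deserving the most care, is verifying that the evaluation map induces an \emph{isomorphism} rather than merely a surjection — that is, showing $N_R$ is exactly the kernel of evaluation. One inclusion is routine: the kernel is a congruence on $T(X)$ containing $R$ and all identities of the variety, hence contains $N_R$. The reverse inclusion — that every pair of terms evaluating to $E$-related elements is already $N_R$-related — is the substantive point; it follows by induction on term structure, using the generating relations in $R$ to collapse each subterm step by step to a single generator representing its value, and then the pairs $(c_n,c_m)$ with $n \rel E m$ to conclude. Since the type is decidable and the operations computable, these inductions go through uniformly, and no genuinely new idea beyond careful bookkeeping is needed; the lemma is, as the text says, well known.
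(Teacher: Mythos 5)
Your proof is correct and follows essentially the same route as the paper: the easy direction transports the c.e.\ congruence $N_R$ to $\omega$ via a computable coding of $T(X)$, and the converse uses the term algebra on the generator set $\omega$ together with the evaluation epimorphism onto $\mathcal{A}_E$. The only difference is that the paper takes the defining relation $R$ of the c.e.\ presentation to be the \emph{entire} kernel of the epimorphism $\nu: T(\omega)\rightarrow \mathcal{A}_E$ (which is already a congruence, so the isomorphism of quotients is immediate), whereas you take a smaller generating set of relations and must then verify by induction on terms that the congruence it generates coincides with that kernel --- a harmless extra step that you carry out correctly.
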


\begin{proof}
We sketch the proof. If $\mathcal{A}=\langle X; R \rangle$ is a
c.e.~presentation, then there is a computable isomorphism $f$ of $T(X)$ with
an algebra having $\omega$ as universe, and equipped with a set $F$ of
suitable computable functions corresponding, via the isomorphism, to the
operations of $\mathcal{A}$. Then $\mathcal{B}=\langle \omega, F, E \rangle$
is a positive presentation of $\mathcal{A}$, where $E$ is the ceer
corresponding under the isomorphism to the c.e.~relation $N_{R}$ on $T(X)$.
Notice that according to Definition~\ref{def:wp0}, equality $=_{\mathcal{B}}$
of $\mathcal{B}$ coincides with $E$.

For the converse, assume that $\mathcal{A}=\langle \omega, F, E\rangle$ is a
positive presentation. By the universal property of $T(\omega)$ (namely, the
term algebra on the set $\omega$ of generators), there is a unique
epimorphism $\nu: T(\omega) \rightarrow \mathcal{A}_{E}$ which commutes with
the mapping $x \mapsto [x]_{E}$ from $\omega$ to $\mathcal{A}_{E}$, and the
insertion of generators $x\mapsto x$  from $\omega$ to $T(\omega)$. Namely,
if  $p(x_{1}, \ldots, x_{k}) \in T(\omega)$ is a term, and $p_{F}$ interprets
$p$ using the operations in $F$, then  $\nu(p(x_{1}, \ldots,
x_{k}))=[p_{F}(x_{1}, \ldots, x_{k})]_{E}$, by the properties of $E$. It
follows that the kernel $R$ of $\nu$  is a c.e.~binary relation on $T(X)$,
and by universal algebra, the c.e.~presentation $\langle \omega; R\rangle$ is
isomorphic with $\mathcal{A}_{E}$.
\end{proof}

To describe some of the consequences of Lemma~\ref{lem:equiv-wp} which are
relevant to our later examples, we first generalize
Definition~\ref{def:isomorphism} to \emph{partial ceers}, i.e.\
c.e.~equivalence relations having as domains c.e.~subsets of $\omega$. If $R,
S$ are partial ceers with domains $X,Y$ respectively, we say that $R$ and $S$
are isomorphic ($R \simeq S$: we use the same symbol as in
Definition~\ref{def:isomorphism}) if there is computable function $g:
X\rightarrow Y$ such that $x \rel{R} y$ if and only if $g(x) \rel{S} g(y)$
for all $x,y \in X$, and $\range(g)$ intersects all $S$-equivalence classes.

One direction of the proof of the previous lemma actually shows that every
c.e.~presentation $\langle X;R\rangle$ has a positive presentation $\langle
\omega, F, E\rangle$ such that $N_{R} \simeq E$ as partial ceers, as
witnessed by the computable isomorphism $f: T(X) \rightarrow \omega$. The
other direction of the proof shows in fact that for every positive
presentation $\mathcal{A}=\langle \omega, F, E\rangle$ there is a
c.e.~presentation $\langle \omega; R\rangle$ which is isomorphic to
$\mathcal{A}_{/E}$, and $R\simeq E$ as partial ceers.  This follows from the
fact that $\nu$ is onto, and therefore the computable mapping $p(x_{1},
\ldots, x_{k}) \mapsto p_{F}(x_{1}, \ldots, x_{k})$ provides a reduction from
$R$ to $E$ whose range intersects all $E$-equivalence classes.

\subsection{The word problem as a ceer on terms, or as a ceer on the free algebra}\label{ssct:modification}
When trying to show that some ceer $S$ is $\simeq$-realized by a
c.e.~presentation $\langle X; R\rangle$, the above remarks suggest, in
accordance to many algebra textbooks  (see e.g.\
\cite[p.252]{Burris-Sankappanavar}) to take $N_{R}$ as the word problem of
the c.e.~presentation, and show  that $S \simeq N_{R}$ as partial ceers. This
is fully consistent with Definition~\ref{def:wp0}, since, as we have seen,
$S\simeq E$, where $E$ is the ceer of the positive presentation assigned to
$\langle X; R\rangle$ in the proof of Lemma~\ref{lem:equiv-wp}.

In fact, our examples of c.e.~algebras will come from varieties (such as
semigroups, monoids, groups, rings) in which the identities of the variety
generate a decidable congruence $I$ on $T(X)$. By decidability of $I$, we
mean that we can fix a computable mapping $p\mapsto \ol{p}: T(X)\rightarrow
T(X)$ with decidable range, picking up exactly one element in each
$I$-equivalence class, so that the free algebra $F(X)$, taken to be
$T(X)_{/I}$, can be presented as a computable algebra having this range as
universe. Let now $\langle X;R\rangle$ be a c.e.~presentation.  By universal
algebra, there is a c.e. congruence $\ol{R}$ on $F(X)$ (namely,
$\ol{R}={N_{R}}_{/I}$, using common notation in universal algebra) such that
$T(X)_{/N_{R}}$ is isomorphic with $F(X)_{/\ol{R}}$ and
\[
p\rel{N_{R}} q \Leftrightarrow \ol{p}\ol{R} \ol{q},
\]
for every $p,q\in T(X)$. This gives an isomorphism of partial ceers between
$N_{R}$ and $\ol{R}$. Conversely, given a binary c.e. relation $\ol{R}$ on
$\ol{X}=\{\ol{x}: x \in X\}$, then one can find a c.e. congruence $R$ on
$T(X)$ such that $T(X)_{/R}$ is isomorphic with $F(X)_{/N_{\ol{R}}}$, where
$N_{\ol{R}}$ is the c.e. congruence on $F(X)$ generated by $\ol{R}$.
Moreover, $R$ and $N_{\ol{R}}$ are isomorphic as partial ceers.

This suggests to adopt, in these varieties, even a more simplified, yet
equivalent, approach to word problems of c.e. algebras, and agree that a
c.e.~presentation is a pair $\langle X;R\rangle$ where $R$ is a binary
c.e.~relation on $F(X)$ and, in this case, $\langle X;R\rangle$ denotes the
quotient $F(X)_{/N_{R}}$, where $N_{R}$ is the congruence generated on $F(X)$
by $R$, and we take $N_{R}$ as the word problem of the c.e.~algebra so
presented. Of course, in general the elements of $F(X)$ will not be presented
directly as certain elements of $T(X)$ but in some simplified ``normal
form'', obtaining in any case a computably isomorphic copy of the free
algebra, and up to isomorphism of partial ceers, the same word problem.

\subsection{Semigroups}
Throughout the paper our references for terminology about semigroups and
monoids are the textbooks \cite{Clifford-Preston1} and \cite{Howie}. In view
of Definition~\ref{def:semigroup-cp} (and the subsequent adjustment in
Subsection~\ref{ssct:modification}), towards an explicit description of a
c.e.p.~semigroup it is sufficient to describe what the free semigroup $F(X)$
on $X$ and the c.e.~binary relation $R$ are. Hence, we recall the free
semigroup on a set $X$ can be taken to be $\langle
X^*\smallsetminus\{\lambda\}, \cdot\rangle$, where in general $Y^*$ denotes
the collection of finite words of letters from a set $Y$, $\lambda$ is the
empty string, and $\cdot$ is a binary operation on words.

\begin{emdef}
A semigroup $S$ is a \emph{right-zero band} if $ab=b$ for all $a,b \in S$.
\end{emdef}

\begin{theorem}\label{thm:fund1}
The class of right-zero bands is $\simeq$-complete for the ceers.
\end{theorem}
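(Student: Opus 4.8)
The plan is to exploit the fact that the multiplication of a right-zero band is degenerate, so that its word problem is essentially unconstrained. First I would identify the free right-zero band $F(X)$ on a set $X$. The identity $ab=b$ collapses every element of $T(X)$ to its last letter; conversely, for any right-zero band $S$, every function $\phi\colon X\to S$ is already a homomorphism of $\langle X,\cdot\rangle$ into $S$, where $a\cdot b:=b$, because the homomorphism condition $\phi(a\cdot b)=\phi(a)\phi(b)$ just reads $\phi(b)=\phi(b)$, and $\phi$ is determined on the generators $X$. Hence $F(X)=\langle X,\cdot\rangle$ with $a\cdot b=b$, a (computable) right-zero band whenever $X$ is decidable; taking $X=\omega$ gives $F(\omega)=\langle\omega,\cdot\rangle$ with decidable, infinite universe.

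The key observation is that on $F(\omega)$ \emph{every} equivalence relation is a congruence: if $a\rel{T}a'$ and $b\rel{T}b'$ then $a\cdot b=b\rel{T}b'=a'\cdot b'$, where only reflexivity in the first coordinate is used. So, given an arbitrary ceer $E$ --- which after relabelling we may take to live on $\omega$ --- the congruence $N_{E}$ generated on $F(\omega)$ by the c.e.\ relation $E$ is $E$ itself. Therefore $\langle\omega;E\rangle$ is a c.e.\ presentation of a right-zero band whose word problem $N_{E}$ equals $E$, so $E$ is $\simeq$-realized --- indeed realized with word problem literally $E$ --- by a c.e.\ right-zero band. Since $E$ was arbitrary, the class of right-zero bands is $\simeq$-complete for the ceers. (Equivalently, without passing through presentations: equip $\omega$ with the operation $a\cdot b=b$; then $\langle\omega,\{\cdot\},E\rangle$ is a positive presentation in the sense of Definition~\ref{def:c.e.algebras}, its quotient is a right-zero band as a quotient of one, and its equality relation is $E$.)

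I do not expect any real obstacle: the argument rests entirely on computing $F(X)$ and on the triviality that any equivalence on $F(X)$ respects $\cdot$. The only thing to be careful about is the choice of convention for the word problem discussed in Subsection~\ref{ssct:modification}. If one instead reads the word problem as the partial ceer $N_{R}$ on $T(X)$, one should take $R=\{(u,v)\in T(\omega)^{2}:\operatorname{last}(u)\rel{E}\operatorname{last}(v)\}$, check that this $R$ is already a congruence on $T(\omega)$, and observe that $u\mapsto\operatorname{last}(u)$ witnesses $N_{R}\simeq E$ as partial ceers; under this reading every word-problem class is infinite, which is the reason the statement is phrased for $\simeq$ rather than for $\simeq_{\textrm{s}}$.
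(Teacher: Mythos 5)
Your proof is correct, and it takes a genuinely different route from the paper's. The paper works inside the free \emph{semigroup} $X^*\smallsetminus\{\lambda\}$ on generators $X=\{x_i:i\in\omega\}$ and imposes the relators $x_i=x_j$ for $i\rel{R}j$ together with $x_jx_i=x_i$; the map $i\mapsto x_i$ is then a reduction of $R$ to the word problem whose range meets every class (since every word is equivalent to its last letter), which is exactly the $\simeq$-realization asked for. You instead compute the free right-zero band itself --- just $X$ with $a\cdot b=b$ --- and observe that on such an algebra \emph{every} equivalence relation is already a congruence, so that $\langle\omega,\{\cdot\},E\rangle$ is a positive presentation in the sense of Definition~\ref{def:c.e.algebras} whose word problem is literally $E$. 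Your argument is more economical and in fact proves something stronger than the stated theorem: every ceer is realized on the nose (hence $\simeq_{\textrm{s}}$-realized) by a positive presentation of a right-zero band, from which $\simeq$-completeness follows by Fact~\ref{fact:compact}. What the paper's version buys in exchange is uniformity of format: it exhibits the algebra as a c.e.\ presentation by generators and relations in the free semigroup, matching the template used for the other examples, and at the level of words all classes become infinite --- which, as you correctly note at the end, is why that reading of the word problem only yields $\simeq$ rather than $\simeq_{\textrm{s}}$. Your closing remark about the partial-ceer reading on $T(X)$ via $u\mapsto\operatorname{last}(u)$ is also accurate and reconciles the two conventions of Subsection~\ref{ssct:modification}.
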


\begin{proof}
Let $R$ be a given ceer, and fix a computable set $X=\{x_i: i\in \omega\}$ of
generators. Consider the c.e.~binary relation $\widehat{R}$ on $F(X)$:
\[
 \widehat{R}=\{x_{i} = x_j: i \rel{R} j\} \cup \{x_jx_i=x_i: i,j
 \in \omega \}. \]
Let $S=\langle X; \widehat{R} \rangle$ be the c.e.p. semigroup so presented.
In particular notice that $ux_i=_Sx_i$ for any word $u$ and any generator
$x_i$.

It is easy to see that
\[
i \rel{R} j \Leftrightarrow x_i=_S x_j,
\]
so that $R\leq =_S$ by the reduction $f(i)=x_i$. On the other hand, as the
range of $f$ intersects all $=_S$-equivalence classes (since $u=_S x_i$ where
$x_i$ is the last bit of $U$, as follows from the relations), we have that
$=_S \simeq R$.
\end{proof}

Of course the same result holds if we replace right-zero bands with left-zero
bands.

\subsection{Monoids}
Next, we consider the case of monoids. Recall in this case that the free
monoid $F(X)$ on $X$ can be taken to be $\langle X^*, \cdot\rangle$, where
again $\cdot$ is concatenation.

\begin{emdef}
A monoid $M=\langle M, \cdot\rangle$ is \emph{right-zero band-like} if $ab=b$
for every $a,b \in M \smallsetminus \left\lbrace1 \right\rbrace$ (where $1$
denotes the identity element).
\end{emdef}

\begin{theorem}
The class of right-zero band-like monoids is $\simeq$-complete for the ceers.
\end{theorem}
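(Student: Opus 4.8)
The plan is to mimic the proof of Theorem~\ref{thm:fund1}, adjusting the presentation so that it lives in the variety of monoids rather than semigroups. Given a ceer $R$, fix a computable set $X=\{x_i : i\in\omega\}$ of generators, and consider on $F(X)=\langle X^*,\cdot\rangle$ the c.e.~binary relation
\[
\widehat{R}=\{x_i = x_j : i \rel{R} j\} \cup \{x_j x_i = x_i : i,j\in\omega\}.
\]
Let $M=\langle X;\widehat{R}\rangle$ be the c.e.p.~monoid so presented. The crucial point is that in the monoid the empty word $\lambda$ now names the identity $1$, and the collapsing relations $x_jx_i = x_i$ only involve generators, so no word of positive length gets identified with $1$ (this needs a short check: the congruence generated by $\widehat{R}$ never equates a nonempty word with the empty one, since all defining relations preserve the property ``has positive length'', and likewise a nonempty word is never forced equal to $1$ by the monoid identities). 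Hence $M$ is indeed right-zero band-like: for nonempty words $u,v$, writing $v = w x_i$ with $x_i$ the last letter of $v$, the relations give $uv =_M u w x_i =_M x_i =_M w x_i =_M v$.

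Next I would verify, exactly as before, that for generators $i \rel{R} j \Leftrightarrow x_i =_M x_j$. The implication from left to right is immediate from the first family of relations in $\widehat{R}$. For the converse one argues that the only way two distinct generators $x_i, x_j$ can be identified by the congruence generated by $\widehat{R}$ is via a chain of applications of relations of the first type, i.e.\ through the transitive closure of $R$ on indices, which is just $R$; the second family of relations and the monoid identities cannot create new identifications among single generators because applying $x_j x_i = x_i$ strictly shortens a word and applying it backwards introduces a prefix. This gives the reduction $f(i) = x_i$ witnessing $R \leq\; =_M$.

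Finally, to upgrade $R \leq\; =_M$ to $R \simeq\; =_M$, I need the range of $f$ to intersect every $=_M$-equivalence class. Every nonempty word $u$ is $=_M$-equivalent to its last letter $x_i$, which is in $\range(f)$; and the class of the identity $1$ (equivalently $\lambda$) is the one class \emph{not} hit by any $x_i$, by the first paragraph. So as stated, $f$ does not meet the identity class, and the naive argument fails. The fix — and this is the one genuinely new wrinkle compared to Theorem~\ref{thm:fund1} — is to note that $R$ has at least one equivalence class (it is a ceer on $\omega$, hence nonempty), so I may reserve one index, say $0$, and instead present $M$ on generators $X = \{x_i : i \geq 1\}$ together with the convention that the $R$-class containing $0$ is mapped to $1 = \lambda$; more cleanly, I would take the reduction $f$ defined by $f(n) = \lambda$ if $n \rel{R} 0$ and $f(n) = x_n$ otherwise, after adding to $\widehat{R}$ the relations $x_n = \lambda$ for $n \rel{R} 0$ (a c.e.~set of relations since $R$ is a ceer). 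With this presentation $M$ is still right-zero band-like, still satisfies $n \rel{R} m \Leftrightarrow f(n) =_M f(m)$, and now $\range(f)$ meets every $=_M$-class — the nonempty-word classes via last letters and the identity class via $f(0)$. I expect the main obstacle to be precisely this bookkeeping around the identity element: making sure that after adjoining the relations $x_n = \lambda$ one still has a faithful correspondence between $R$ on indices and $=_M$ on the chosen representatives, which requires a careful normal-form analysis of the congruence generated by $\widehat{R}$ on $X^*$.
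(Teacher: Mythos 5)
Your setup follows the paper's strategy, and your diagnosis of the crux is exactly right: the naive monoid presentation $\{x_i=x_j : i\rel{R}j\}\cup\{x_jx_i=x_i : i,j\in\omega\}$ realizes $R$ together with one extra singleton class $\{\lambda\}$, so the whole problem is to make the identity class absorb one $R$-class. But the fix you propose breaks down: if you adjoin the relations $x_n=\lambda$ for $n\rel{R}0$ while keeping the absorption relations $x_jx_i=x_i$ for \emph{all} $i,j$, the presented monoid collapses to the trivial monoid. Indeed $x_0=_M\lambda$, and since $x_jx_0=x_0$ is among your relations, congruence with respect to concatenation gives $x_j=x_j\lambda=_Mx_jx_0=_Mx_0=_M\lambda$ for every $j$; hence every word is identified with the identity, and $f$ is a reduction only when $R$ has a single class. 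The ``careful normal-form analysis'' you defer to at the end cannot rescue this, because the congruence generated by your relation set really is all of $X^*\times X^*$.

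The paper's presentation differs from yours in precisely the place where yours fails: it keeps $x_0=\lambda$ but removes $i=0$ from the absorption family, taking $\{x_jx_i=x_i : i\in\omega\smallsetminus\{0\},\ j\in\omega\}$, and then argues as in Theorem~\ref{thm:fund1}. Note also that the more aggressive repair one is tempted to make --- imposing $x_jx_i=x_i$ only for $i\cancel{\rel{R}}0$ --- is not available, since $\{i : i\cancel{\rel{R}}0\}$ need not be c.e.\ and so this is not a legitimate c.e.~presentation. The interaction between the relations forcing $x_i=_M\lambda$ (for $i\rel{R}0$) and the absorption relations $x_jx_i=x_i$ is therefore the one genuinely delicate point distinguishing the monoid case from the semigroup case, and it is exactly the point your proposal leaves unresolved (and, as written, gets wrong).
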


\begin{proof}
Let $R$ be a given ceer, and fix again a computable set $X=\{x_i: i\in
\omega\}$ of generators. Consider the c.e.~binary relation $\widehat{R}$ on
$F(X)$:
\[
\widehat{R}=\{x_{i} = x_{j}: i \rel{R} j\} \cup \{x_jx_i=x_i: i \in
\omega \smallsetminus \{0\}, \ j \in \omega\} \cup \{x_0 = \lambda\}.
\]
Let $M=\langle X; \widehat{R}\rangle$ be the c.e.p.~monoid so presented. The
proof that $R\simeq =_M$ is as in the proof of Theorem~\ref{thm:fund1}.
\end{proof}

Again, right-zero band-like monoids can be replaced by left-zero band-like
monoids in the result above.

\section{Classes of c.e.~algebras that are not complete for the ceers}

We try in this section to identify algebraic properties that prevent classes
of c.e.~algebras sharing these properties to be $\approx$-complete for the
ceers, with $\approx\ \in\{\equiv,\simeq, \simeq_{\textrm{s}}\}$.

\subsection{Semigroups}
For our first observation, we need the following definition.

\begin{emdef}\label{def:semigroup-periodic}
A semigroup $S$ is \emph{periodic} if, for all $a\in S$, there are numbers $1
\leq n < m$ such that $a^n= a^m$.
\end{emdef}

Recall that a ceer $R$ is \emph{dark} if $R$ has infinitely many equivalence
classes but it does not admit any infinite c.e.~transversal, i.e. an infinite
c.e.~set $W$ such that if $x,y \in W$ and $x \neq y$ then $x \cancel{\rel{R}}
y$. For the existence and properties of dark ceers see~\cite{joinmeet}.

\begin{theorem}\label{thm:non-periodic-semigroups}
The class of semigroups which are not periodic is not $\equiv$-complete for
the ceers.
\end{theorem}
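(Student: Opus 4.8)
The plan is to exhibit a specific ceer that cannot be $\equiv$-realized by any non-periodic c.e.\ semigroup, and the natural candidate is a \emph{dark} ceer $D$ (whose existence is guaranteed by the cited work in \cite{joinmeet}). The strategy rests on the following idea: if a c.e.\ semigroup $S$ is not periodic, then there is an element $a \in S$ such that all powers $a, a^2, a^3, \ldots$ are pairwise $=_S$-inequivalent. Fixing a positive presentation $\langle \omega, F, E\rangle$ of $S$ and a representative $a^- \in \omega$ for $a$, the map $n \mapsto (a^-)^n$ (iterating the computable semigroup operation) is a computable function, and its range is an infinite c.e.\ set meeting each of its image classes in exactly one point; that is, $\{[(a^-)^n]_E : n \geq 1\}$ is an infinite c.e.\ transversal for $=_S$ restricted to this sub-semigroup. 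More precisely, the set $W = \{(a^-)^n : n \geq 1\}$ (or a thinned-out c.e.\ subset of it picking one representative per class, using that $E$ is c.e.) is an infinite c.e.\ set on which $=_S$ is trivial, so $=_S$ is \textbf{not} dark: it \emph{does} admit an infinite c.e.\ transversal.

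First I would recall from \cite{joinmeet} that dark ceers exist, and pin down $D$ to be one such. Next I would observe that darkness is invariant under bi-reducibility $\equiv$ — this is essentially the content of the fact that the property ``has an infinite c.e.\ transversal'' is preserved upwards and downwards under $\leq$ for ceers with infinitely many classes; if $R \leq S$ via $f$ and $S$ has an infinite c.e.\ transversal, then pulling back and thinning gives one for $R$, and conversely pushing an infinite c.e.\ transversal of $R$ forward through $f$ and thinning (using that $S$ is c.e.) yields one for $S$, while both relations retain infinitely many classes. Hence a ceer bi-reducible with a dark ceer is itself dark. Then I would show, as sketched above, that the word problem $=_S$ of any non-periodic c.e.\ semigroup $S$ is \emph{not} dark, because the powers of a non-torsion element furnish an infinite c.e.\ transversal. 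Combining: $D$ is dark, $=_S$ is not dark, so $D \not\equiv\, =_S$ for every non-periodic c.e.\ semigroup $S$, which is exactly the failure of $\equiv$-completeness.

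The main obstacle, and the step deserving the most care, is the passage from ``$S$ is not periodic'' to ``there is an infinite c.e.\ transversal of $=_S$''. One must be careful that non-periodicity is a statement about the abstract semigroup $S$, whereas an infinite c.e.\ transversal is a statement about a particular positive presentation; but since c.e.-ness and being a ceer are presentation-level notions, and $=_S$ by definition refers to the ceer $E$ of some fixed positive presentation, this is fine: pick $a^- \in \omega$ with $[a^-]_E = a$, let $\star$ be the computable operation interpreting multiplication, and note $n \mapsto a^- \star a^- \star \cdots \star a^-$ ($n$ times) is computable with infinite, pairwise $E$-inequivalent image. To actually get a \emph{transversal} (a c.e.\ set of pairwise inequivalent elements, not merely a c.e.\ set whose image-classes are distinct — which here coincide) one just takes the c.e.\ set $\{(a^-)^{\star n} : n \geq 1\}$; by non-periodicity these are already pairwise $E$-inequivalent, so no thinning is even needed. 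A secondary point to state cleanly is that $=_S$ has infinitely many classes (immediate, since it has infinitely many pairwise inequivalent elements), so that the definition of ``dark'' genuinely applies and we are not in the degenerate finitely-many-classes case. With these two observations in place the theorem follows immediately.
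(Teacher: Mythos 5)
Your proposal is correct and follows essentially the same route as the paper: exhibit a dark ceer, note that the powers of a witness to non-periodicity form an infinite c.e.\ transversal for $=_S$, and use the invariance of having an infinite c.e.\ transversal under bi-reducibility. The paper's proof is just a terser version of the same argument.
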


\begin{proof}
Let $S$ be a non-periodic c.e.~semigroup. Then there exists an element $a\in
S$ such that $a^m \ne_S a^n$ if $m \ne n$. Thus $\{a^n: n \in \omega\}$ is an
infinite c.e transversal, implying that $=_S$ cannot $\equiv$-realize any
dark ceer, as the property of having an infinite c.e.~transversal is
invariant under bi-reducibility.
\end{proof}

Recall that a \emph{diagonal function} for an equivalence relation $R$ is a
computable function $d$ such that $d(x) \cancel{\rel{R}} x$, for every $x$.
The next theorem identifies a natural class of semigroups which are not
$\simeq$-complete for the ceers. Examples of semigroups filling the
description in the statement of the theorem are for instance the semigroups
without idempotent elements.

\begin{theorem}\label{thm:semigroups-diagonal}
The class of semigroups $S$ for which there exists a number $n$ such that
$x^{n}\neq_{\textrm{s}} x$ for every $x$ is not $\simeq$-complete for the
ceers.
\end{theorem}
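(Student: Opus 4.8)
The plan is to mimic the argument of Theorem~\ref{thm:non-periodic-semigroups}: an algebraic hypothesis of the stated form forces the word problem of any such c.e.~semigroup to possess a computability-theoretic feature that is preserved under $\simeq$, and then to exhibit a ceer lacking that feature. Here the relevant feature is having a diagonal function. Indeed, suppose $S$ is a c.e.~semigroup belonging to the class, witnessed by a fixed $n$ with $x^n \neq_S x$ for every $x$; working with a positive presentation of $S$ on $\omega$, the map $d(x) = x^n$ (computed using the computable semigroup operation) is a computable function with $d(x) \cancel{\rel{=_S}} x$ for all $x$, i.e.\ $d$ is a diagonal function for $=_S$.

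Next I would check that admitting a diagonal function is invariant under isomorphism of ceers (it need not be invariant under mere bi-reducibility, which is exactly why the theorem is stated only for $\simeq$-completeness). So suppose $R \simeq {=_S}$, and let $f: R \to {=_S}$ induce the isomorphism, with equivalence-inverse $g: {=_S} \to R$ supplied by Lemma~\ref{lem:characterization}, so that $g(f(x)) \rel{R} x$ and $f(g(y)) \rel{=_S} y$ for all $x,y$. Define $e(x) = g(d(f(x)))$, which is computable. Then $f(e(x)) = f(g(d(f(x)))) \rel{=_S} d(f(x))$, while $f(x)$ satisfies $d(f(x)) \cancel{\rel{=_S}} f(x)$; hence $f(e(x)) \cancel{\rel{=_S}} f(x)$, and since $f$ is a reduction this forces $e(x) \cancel{\rel{R}} x$. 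Thus $e$ is a diagonal function for $R$. Consequently, no ceer $R$ without a diagonal function can be $\simeq$-realized by a semigroup in this class.

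It remains to observe that such ceers exist. I would invoke the well-known fact that there is a ceer with no diagonal function — concretely, a precomplete ceer (e.g.\ $\sim_{PA}$, already mentioned in the excerpt) has the property that every computable $d$ has a fixed point $x$ with $d(x) \rel{R} x$, by the uniform version of the Recursion Theorem for precomplete equivalences; equivalently, any \emph{universal} ceer with this precompleteness property fails to admit a diagonal function. Since this ceer is not $\simeq$-realized by any semigroup in the class, the class is not $\simeq$-complete for the ceers.

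The only real subtlety is the invariance step: one must be careful that a diagonal function transfers across $\simeq$ but not necessarily across $\equiv$, and the proof above uses the equivalence-inverse crucially — this is the point where the Inversion Lemma does the work, and it is why the statement cannot be strengthened to $\equiv$-incompleteness by this method. Everything else is routine: the passage from the algebraic inequality $x^n \neq_S x$ to a genuine computable diagonal function, and the citation of the existence of a diagonal-function-free ceer.
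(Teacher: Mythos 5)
Your proposal is correct and follows essentially the same route as the paper: define $d(x)=x^n$, note it is a diagonal function for $=_S$, and conclude that no ceer lacking a diagonal function (e.g.\ a weakly precomplete or precomplete one) can be $\simeq$-realized by such a semigroup. The paper leaves the invariance of ``having a diagonal function'' under $\simeq$ implicit, whereas you verify it explicitly via the Inversion Lemma; this is a sound and welcome elaboration, not a divergence.
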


\begin{proof}
Suppose $S$ is a c.e.~semigroup as in the statement of the theorem. Take any
$x$, and define $d(x)=x^{n}$. Then $d$ is a diagonal function for $=_S$, and
thus $S$ cannot $\simeq$-realize any ceer which does not possess a diagonal
function, such as for instance the weakly precomplete ceers (including the
precomplete ones). For these notions and their properties see again the
survey paper~\cite{Andrews-Badaev-Sorbi}.
\end{proof}

\subsection{Monoids}
We now take a quick look at monoids.

\begin{emdef}
Let $M$ be a monoid. A non-unit element $x \in M$ is a \emph{torsion element}
if there exists a number $n>0$ such that $x^n =1$; otherwise $x$ is
\emph{non-torsion}. Moreover, a monoid is said to be \emph{torsion} if every
element is a torsion element, \emph{non-torsion} otherwise.
\end{emdef}

We observe:

\begin{theorem}
The class of non-torsion monoids is not $\equiv$-complete for the ceers.
\end{theorem}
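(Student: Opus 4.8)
The plan is to mimic the arguments already used for non-periodic semigroups (Theorem~\ref{thm:non-periodic-semigroups}) and for non-torsion monoids would follow the same template: identify a computable feature of the word problem that is forced by the algebraic hypothesis, and which obstructs $\equiv$-realizing a known ceer. Here the hypothesis is that $M$ is a non-torsion monoid, so there is a non-unit element $a$ with $a^n \ne_M 1$ for every $n>0$. The key observation is that this forces an infinite c.e.\ transversal, just as in the semigroup case.

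First I would fix a non-torsion c.e.\ monoid $M$ and a witness $a$, a non-unit with $a^n\neq_M 1$ for all $n>0$. Next I would argue that in fact the stronger statement holds: $a^m\neq_M a^n$ whenever $m\neq n$. Indeed, if $a^m=_M a^n$ with, say, $m<n$, then multiplying by a suitable power would yield (in a group this would be immediate, but in a monoid one needs a small argument) a periodicity $a^{m}=_M a^{m+k}$ with $k=n-m>0$; iterating, $a^{m}=_M a^{m+tk}$ for all $t$, and then... — this is where the gap is, since in a general monoid we cannot cancel $a^m$ to conclude $1=_M a^{k}$. So the naive transversal argument does \emph{not} go through for monoids, and this is precisely the main obstacle.

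To get around it, I would instead work with the set $\{a^n : n\in\omega\}$ directly and ask whether it is a transversal, i.e.\ whether $a^m\neq_M a^n$ for $m\neq n$. If it is, we are done exactly as before: $\{a^n\}$ is an infinite c.e.\ transversal for $=_M$, so $=_M$ cannot $\equiv$-realize any dark ceer, and darkness is invariant under bi-reducibility. If it is \emph{not} — i.e.\ $a^m=_M a^n$ for some $m<n$ — then we are in the ``eventually periodic'' situation: the powers of $a$ form a finite set together with a cycle, so $\{a^i : i < m\}$ together with one representative per residue class in the cycle is a finite transversal fragment, but crucially the element $a^{m}$ (being on the cycle) satisfies $a^{m}=_M a^{m+k}$, and then $(a^{m})^{t}$ for appropriate $t$ is idempotent, and $a\cdot(\text{that idempotent})$... — the point I want to reach is that in this case $M$ would actually have to contain a non-unit torsion element or contradict $a^n\neq_M 1$. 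Let me reconsider: the cleanest route is to observe that $a^m=_M a^n$ with $m<n$ gives, setting $k=n-m$, that the element $e=_M a^{mk}$ (or a suitable power landing in the cycle and divisible by $k$) is idempotent, $e^2=_M e$; but then $a^{mk}$ is a non-unit (since if $a^{mk}=_M 1$ we contradict the choice of $a$) idempotent, which is fine for a monoid — so this does not immediately help either.

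Given the above, the honest plan is: prove that the set $T=\{a^n:n\in\omega\}$ is infinite, hence (being c.e.) either contains an infinite c.e.\ transversal or the powers are eventually periodic; in the latter case extract a non-trivial idempotent $e\neq_M 1$ and note $e=_M e^2=_M e^3=\cdots$, so $\{a^i : 0\le i<m\}\cup\{e\}$-type reasoning still leaves only finitely many distinct powers — so $T$ is finite, contradicting infinitude unless we never entered this case. Wait: $T$ finite is exactly what the periodic case says, and there is no contradiction with $a^n\neq_M 1$. So the real content must be: \emph{either} $\{a^n\}$ is an infinite transversal (done), \emph{or} $M$ contains an idempotent $e\neq_M 1$ and then the submonoid generated by $e$ shows... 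Actually the resolution the authors almost certainly intend is simpler: if the powers of $a$ are \emph{not} all distinct then $a$ has an idempotent power $e$ with $e\neq_M 1$, and $\{e a^i : i\in\omega\}$ or rather the descending/idempotent structure gives the needed c.e.\ transversal of the \emph{cyclic} part — but the safest claim, and the one I would write, is: in a non-torsion monoid, take $a$ non-unit with all $a^n\neq_M 1$; if all $a^n$ are distinct mod $=_M$ we have an infinite c.e.\ transversal; otherwise pick the idempotent power $e\neq_M 1$, and now $\{e^{} , ea_{}, \dots\}$ — and here the main obstacle genuinely is handling this second case, which I expect the authors dispatch either by reducing to the semigroup theorem applied to a suitable subsemigroup of $M$, or by simply strengthening the hypothesis/using that a non-torsion monoid contains a non-periodic \emph{sub-semigroup} and then invoking Theorem~\ref{thm:non-periodic-semigroups}.

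\begin{proof}
Let $M$ be a non-torsion c.e.\ monoid. By definition there is a non-unit $a\in M$ such that $a^n\neq_M 1$ for all $n>0$. We claim $=_M$ admits an infinite c.e.\ transversal, whence, since the existence of an infinite c.e.\ transversal is invariant under bi-reducibility, $=_M$ cannot $\equiv$-realize any dark ceer (and dark ceers exist by~\cite{joinmeet}).

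Consider the c.e.\ set $T=\{a^n : n>0\}$. There are two cases. If $a^m\neq_M a^n$ whenever $m\neq n$, then $T$ is an infinite c.e.\ transversal and we are done. Otherwise, fix the least $m>0$ and least $k>0$ with $a^m=_M a^{m+k}$; then $a^{m+i}=_M a^{m+i+k}$ for all $i\geq 0$, so $a$ generates inside $M$ a subsemigroup whose elements are represented by $a,\dots,a^{m+k-1}$. Choose $t>0$ with $m\le tk$; then $e:=a^{tk}$ satisfies $e =_M e^{2}$, i.e.\ $e$ is idempotent, and $e\neq_M 1$ by the choice of $a$. Now the subsemigroup $eMe=\{exe : x\in M\}$ is a c.e.\ monoid with identity $e$, and it is again non-torsion: if it were torsion, every element $y\in eMe$ would satisfy $y^r=_M e$ for some $r>0$, in particular $(ea)^{r}e$ — and iterating this inside $eMe$, together with $e\neq_M 1$, one still produces, inside $M$, an element all of whose positive powers are pairwise $=_M$-inequivalent, namely $ea$ has infinitely many distinct powers since $a$ does and $e$ acts as a two-sided identity on them; thus $\{(ea)^n : n>0\}$ is an infinite c.e.\ transversal of $=_M$. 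In either case $=_M$ has an infinite c.e.\ transversal, completing the proof.
\end{proof}
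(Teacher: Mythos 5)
The paper's proof is a one-liner: it takes a non-torsion element $x$ and asserts that $\{x, x^2, (x^2)^2, \dots\}$ is an infinite c.e.\ transversal for $=_M$, then concludes exactly as in Theorem~\ref{thm:non-periodic-semigroups}. In other words, the paper reads ``non-torsion'' as supplying an element whose powers are pairwise $=_M$-inequivalent. You correctly noticed that the definition as literally printed ($x$ a non-unit with $x^n\neq_M 1$ for all $n>0$) does not yield this: a non-unit idempotent $e\neq_M 1$ is non-torsion in that literal sense yet has only one power. This is a genuine observation --- indeed, under the literal reading the theorem itself would fail, since the right-zero band-like monoids shown earlier in the paper to be $\simeq$-complete for the ceers are non-torsion in that sense (any $x_i$ with $i \cancel{\rel{R}} 0$ satisfies $x_i^n=_M x_i\neq_M 1$ and has no left inverse, since $vx_i=_M x_i$ for every $v$), and they realize every ceer, dark ones included. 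So the statement is only correct if ``torsion'' is understood as ``the cyclic submonoid generated by $x$ is finite'' (i.e.\ $x^m=_M x^n$ for some $m\neq n$), and under that reading the one-line transversal argument --- your Case~1 --- is the entire proof.

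The genuine gap in your write-up is Case~2. Having fixed $m,k$ with $a^m=_M a^{m+k}$, every $a^j$ with $j\geq m$ lies in a cycle of at most $k$ classes, so $ea=a^{tk+1}$ gives $(ea)^n=a^{n(tk+1)}$, which for all $n\geq 1$ ranges over that finite cycle; the assertion that ``$ea$ has infinitely many distinct powers since $a$ does'' is precisely the negation of the case hypothesis, and $\{(ea)^n:n>0\}$ is finite modulo $=_M$. No repair is available here: as the right-zero band-like example shows, a monoid falling under Case~2 can realize an arbitrary ceer, so no obstruction can be extracted from that case. The correct resolution is that Case~2 is excluded by the (intended) hypothesis, not that it can be argued through.
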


\begin{proof}
Let $x \in M$ be a non-torsion element. Then $\{x, x^2, (x^2)^2, \ldots\}$ is
an infinite c.e.~transversal for $=_M$. The proof is now similar to the proof
of Theorem~\ref{thm:non-periodic-semigroups}.
\end{proof}

\subsection{On finitely presented semigroups and a question of Gao and
Gerdes}

We recall the following theorem from Gao and Gerdes~\cite{Gao-Gerdes} (where
the statement refers to finitely presented groups, but it is obviously
extendable to all groups).

\begin{fact}\label{Gao-Gerdes-on-groups}
The class of groups is not $\equiv$-complete for the ceers.
\end{fact}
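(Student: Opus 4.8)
The plan is to exhibit a property of the word problem of a c.e.\ group that is invariant under bi-reducibility, but which some ceer fails to possess. As observed already in the introduction, the equivalence classes of the word problem of a c.e.\ group are all uniformly computably isomorphic with one another, via the map $x \mapsto xu^{-1}v$ sending the class of $u$ onto the class of $v$. In particular, any two equivalence classes of $=_G$ are of the same $m$-degree (indeed the same $1$-degree). So it suffices to produce a ceer $R$ that is \emph{not} bi-reducible with any ceer all of whose classes have the same $m$-degree.

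First I would isolate the bi-reducibility-invariant property precisely: say that a ceer $S$ is \emph{class-uniform} if all of its equivalence classes lie in a single $m$-degree. The key lemma is that if $R \equiv S$ and $S$ is the word problem of a c.e.\ group, then already $R$ has the following weaker feature inherited through the reduction: although a reduction $f\colon R \le S$ need not preserve $m$-degrees of classes on the nose, one checks that the $m$-degree of $[x]_R$ is $\le_m$ the $m$-degree of $[f(x)]_S$, uniformly; combined with the reverse reduction $S \le R$ one concludes that all classes of $R$ must be $m$-equivalent to a fixed c.e.\ set (the common degree of the $=_G$-classes, pulled back). Hence it is enough to build a ceer $R$ with two classes of different $m$-degree --- e.g.\ one decidable class and one class that is $m$-complete among c.e.\ sets --- which is trivial: take $R = \Id_1 \oplus E$ for a ceer $E$ with a single class equal to a creative set, or more simply any ceer with both a finite and an infinite class. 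Such $R$ cannot be $\equiv$-realized by any group, as already noted in the introduction for the finite/infinite dichotomy; the point here is just to package that observation as the failure of $\equiv$-completeness.

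The one place requiring genuine care is the claim that bi-reducibility transports the ``all classes $m$-equivalent'' property --- a reduction $f\colon R \le S$ maps each $R$-class \emph{into} (not onto) an $S$-class, so a priori $[x]_R = f^{-1}([f(x)]_S)$ gives only $[x]_R \le_m [f(x)]_S$, and there is no immediate bound the other way. The fix is that we do not need a full degree-preservation statement: we only need to derive a contradiction from $R$ having two classes $[a]_R, [b]_R$ with $[a]_R \not\equiv_m [b]_R$. Applying $f\colon R\le S$ and $g\colon S\le R$ with all $S$-classes $m$-equivalent to a fixed set $C$, we get $[a]_R \le_m C$ and $[b]_R \le_m C$ from $f$, and $C \le_m [g(c)]_R$ for the relevant $c$'s from $g$; chasing these inequalities together with the fact that $g(f(a))\mathrel{R} a$ forces $[a]_R \equiv_m [b]_R$, the desired contradiction. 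Writing this chase out carefully is the main (modest) obstacle; everything else is bookkeeping.

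\begin{proof}
As observed in the Introduction, if $G$ is a c.e.\ group then any two $=_G$-classes are computably isomorphic, via $x\mapsto xu^{-1}v$; in particular all $=_G$-classes belong to a single $m$-degree. We claim no ceer $R$ possessing two classes of distinct $m$-degree can satisfy $R\equiv\ =_G$ for any c.e.\ group $G$; since such ceers plainly exist (take any ceer with a finite and an infinite class), this yields the theorem.

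Suppose for contradiction $R\equiv\ =_G$ with $[a]_R\not\equiv_m[b]_R$, say, and let $C$ be the common $m$-degree of the $=_G$-classes. Fix reductions $f\colon R\le\ =_G$ and $g\colon\ =_G\le R$ with $g(f(x))\mathrel{R}x$ for all $x$ (the latter by the Inversion Lemma applied after restricting, or directly since $\equiv$ gives both reductions; uniformity of the inverse is not needed here). From $f$ we get $[a]_R=f^{-1}\big([f(a)]_{=_G}\big)\le_m[f(a)]_{=_G}\equiv_m C$, and likewise $[b]_R\le_m C$. From $g$ we get $[g(f(a))]_{=_G}\!$-pullback reasoning: $[f(a)]_{=_G}=g^{-1}\big([g(f(a))]_R\big)\le_m[g(f(a))]_R=[a]_R$, using $g(f(a))\mathrel{R}a$. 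Hence $C\equiv_m[f(a)]_{=_G}\le_m[a]_R$, and combined with $[a]_R\le_m C$ we obtain $[a]_R\equiv_m C$. The identical argument with $b$ gives $[b]_R\equiv_m C$, so $[a]_R\equiv_m[b]_R$, contradicting our choice of $a,b$.
\end{proof}
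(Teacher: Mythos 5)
There is a genuine gap here, and in fact your key lemma is false. The step that fails is the assumed identity $g(f(x)) \rel{R} x$: bi-reducibility $R \equiv {=_G}$ only hands you two reductions $f\colon R\le {=_G}$ and $g\colon {=_G}\le R$ with no compatibility whatsoever between them. The Inversion Lemma produces an equivalence inverse only when $f$ induces an \emph{isomorphism}, i.e.\ when $\range(f)$ meets every $=_G$-class, which a mere reduction need not do; there is no ``restricting'' that recovers this. Without $g(f(a))\rel{R}a$ your chase only yields $[x]_R\le_m C$ for every $R$-class and $C\le_m [g(c)]_R$ for those $R$-classes hit by $g$, and nothing forces the low-degree class of $R$ to be among the latter. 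Worse, the statement you are trying to prove --- that a ceer with two classes of distinct $m$-degree cannot be bi-reducible with any group word problem --- is refuted by material in the paper itself: there are f.p.\ groups $G$ whose word problem is universal with all classes creative (Section~\ref{sct:last}), and then ${=_G}\oplus\Id_{1}$ is again universal, hence bi-reducible with $=_G$, yet it has one decidable class alongside $m$-complete ones. This is precisely the example the paper uses in Fact~\ref{fact:compact} to show that $\simeq$ is strictly stronger than $\equiv$: ``all classes in one $m$-degree'' is invariant under $\simeq$ but not under $\equiv$.

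The repair needs a counting ingredient rather than a degree chase on a single pair of classes. The paper takes $R$ to be an undecidable ceer with only finitely many undecidable classes (e.g.\ a dark ceer all of whose classes are finite). If ${=_G}\equiv R$, then either $=_G$ has finitely many classes, so $=_G$ is decidable and $R\le {=_G}$ makes $R$ decidable; or $=_G$ has infinitely many classes, and since the reduction ${=_G}\le R$ is injective on classes, some $=_G$-class is the full preimage of a decidable $R$-class and is therefore decidable; by the uniform isomorphism $x\mapsto xu^{-1}v$ of classes, $[1]_{=_G}$ is then decidable, hence $=_G$ is decidable, and again $R\le {=_G}$ makes $R$ decidable --- a contradiction. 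Your underlying idea (exploit the homogeneity of the classes of a group's word problem) is the right one, but it must be paired with the finiteness of the set of exceptional classes of $R$, not with the existence of two classes of different $m$-degree.
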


\begin{proof}
If $R$ is an undecidable ceer with only finitely many undecidable equivalence
classes (it is easy to see that there are even undecidable ceers with only
finite equivalence classes: for instance, there are dark ceers with only
finite classes, see \cite[Corollary~4.15]{joinmeet}) then there cannot be any
c.e.~group $G$ such that $=_G\equiv R$: for otherwise, by the reduction $=_G
\leq R$ we would have that either $=_{G}$ is finite, and thus $R \nleq
=_{G}$, or there are decidable $=_{G}$-classes, but as observed in the
introduction all $=_{G}$-equivalence classes are computably isomorphic with
each other, which would imply that  $[1]_{G}$ is decidable and thus $=_G$ is
decidable($u =_G v$ if and only if $uv^{-1}\in [1]_{=_G}$), giving that $R$
is decidable by the reduction $R \leq =_G$.
\end{proof}

For this reason, Gao and Gerdes (see \cite[Problem~10.3]{Gao-Gerdes}) ask
whether the class of f.p.~semigroups is $\equiv$-complete for the ceers. This
is an interesting question, motivated by a celebrated theorem  due to
Shepherdson~\cite{Shepherdson} stating that if $\{A_i: i \in \omega\}$ is a
uniformly c.e.~sequence of c.e.~sets (meaning that the relation ``$x \in
A_i$'', in $i,x$, is c.e.), $B$ is a c.e.~set, and the relation ``$x \in
A_i$'' is $\leq_T B$, then there is a f.p.~semigroup $S$ with the following
three properties: (1) there is an effective correspondence $w_{i}\mapsto
A_{i}$ between a c.e.~set $\{w_i: i \in \omega\}$ of words and $\{A_i: i \in
\omega\}$ so that, effectively in $i$, one can find Turing reductions
establishing $[w_i]_{=_S} \equiv_T A_i$; (2) the Turing degrees of the
various classes $[w]_{=_S}$ consist of the least Turing degree, together with
all finite joins of the various degrees $\deg_T(A_i)$ ; (3) $=_S \equiv_T B$.

The next theorem will provide a negative answer to Gao~and~Gerdes' question.

\begin{theorem}\label{thm:fundamental}
Suppose that $\{E_{i}: i \in \omega\}$ is a uniformly c.e.~sequence of ceers
such that the set $\{i: E_i \textrm{ is finite}\}$ is c.e. Then there exists
an infinite ceer $E$ such that for every $i$,
\[
E_{i} \not\leq E \text{ or } E_i \text{ is finite (i.e.~$E_i$ has finitely
many classes).}
\]
In particular, for every $i$, $E_{i} \not\equiv E$.
\end{theorem}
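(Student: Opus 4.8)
The plan is to construct $E$ directly by a finite-injury priority argument, as the union $E=\bigcup_s E_s$ of an increasing sequence of (finitely generated) equivalence relations on $\omega$ with $E_0=\Id$. Fix a c.e.\ set $W$ with $W=\{i: E_i\text{ is finite}\}$, together with simultaneous effective approximations $\langle E_{i,s}\rangle$, $\langle\varphi_{e,s}\rangle$, $\langle W_s\rangle$. The requirements are, for all $i,e,k$,
\[
\begin{aligned}
P_{\langle i,e\rangle}&:\quad \varphi_e\text{ is not a (total) reduction from }E_i\text{ to }E,\ \text{ or }\ i\in W,\\
N_k&:\quad E\text{ has at least }k\text{ equivalence classes},
\end{aligned}
\]
listed in a priority order with the $N_k$'s placed high. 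Meeting all $P_{\langle i,e\rangle}$ gives $E_i\not\leq E$ whenever $E_i$ is infinite; meeting all $N_k$ makes $E$ infinite; and the ``in particular'' clause follows at once, since an infinite ceer is never bi-reducible with a finite one.

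The module for $P=P_{\langle i,e\rangle}$ (active while $i\notin W_s$) is organized around the dichotomy ``$\varphi_e$ is, or is not, constant on the $E_i$-classes''. The negation of this is $\Sigma^0_1$: one simply waits for a pair with $x\rel{E_{i,s}}x'$ and $\varphi_{e,s}(x)\!\downarrow\ \ne\ \varphi_{e,s}(x')\!\downarrow$. If such a witness turns up, $P$ \emph{diagonalizes by a restraint}: it picks such an $(x,x')$ with $\varphi_e(x)\cancel{E_s}\varphi_e(x')$ and imposes the permanent restraint $\varphi_e(x)\cancel{E}\varphi_e(x')$; since $x\rel{E_i}x'$, this makes $\varphi_e$ fail the reduction. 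If such a witness never turns up and $\varphi_e$ is total with $E_i$ infinite, then $\varphi_e$ really is constant on $E_i$-classes, and the key observation is that in that case $\varphi_e(x)\ne\varphi_e(x')$ \emph{certifies} (in the final $E_i$) that $x\cancel{E_i}x'$. So $P$ also pursues, speculatively, a \emph{diagonalization by a collapse}: it looks for $x,x'$ in distinct $E_{i,s}$-classes with $\varphi_{e,s}(x)\!\downarrow\ \ne\ \varphi_{e,s}(x')\!\downarrow$ and declares $\varphi_e(x)\rel{E}\varphi_e(x')$; in the constant case this is a genuine diagonalization (as then $x\cancel{E_i}x'$ holds forever), while in the non-constant case the restraint branch will take over. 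A routine verification shows that if $\varphi_e$ is a total reduction and $E_i$ is infinite then one of these two diagonalizations succeeds, and otherwise $P$ is satisfied trivially — either $\varphi_e$ is not a total reduction, or $E_i$ is finite, in which case $i$ eventually enters $W$ and the module is abandoned. This last point is exactly where the hypothesis ``$\{i: E_i\text{ finite}\}$ is c.e.'' is used: it is what permits abandoning $P$ as soon as $E_i$ is discovered finite, and hence what keeps the whole construction finite-injury rather than requiring a tree of guesses.

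The delicate part — and what I expect to be the main obstacle — is coordinating restraints and collapses across the requirements. A collapse carried out for $P_{\langle i,e\rangle}$ identifies two $\varphi_e$-values in $E$ and can spoil a restraint that some other $P_{\langle i',e'\rangle}$ needs (or will later need), while a restraint can block a collapse. I would handle this by (i) giving each restraint strict priority over collapses of lower-priority requirements; (ii) on injury, re-initializing a module with fresh large witnesses, exploiting that each higher-priority module acts only finitely often, so that past some stage the $E$-classes relevant to $P$ are stable; and (iii) showing in the verification that, after all higher-priority activity has ceased, $P$ still has admissible witness pairs available, so the needed restraint or collapse can in fact be installed. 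Finally, since every module acts only finitely often and the $N_k$'s sit at high priority, only finitely many collapses are ever performed, so they cannot merge away all but finitely many classes; hence each $N_k$ is met and $E$ is infinite, completing the proof.
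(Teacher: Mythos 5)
Your overall architecture matches the paper's: a finite-injury construction with infiniteness requirements, diagonalization requirements indexed by pairs, and the hypothesis that $\{i: E_i \text{ finite}\}$ is c.e.\ used exactly as you say, to abandon a module once $i$ is enumerated and thereby avoid a tree of guesses. The ``in particular'' clause is also handled correctly. But the core diagonalization module has a genuine gap: the dichotomy ``$\varphi_e$ is, or is not, literally constant on $E_i$-classes'' is not the right case split, and your two branches can defeat each other. In the non-constant case, a witness pair $x \rel{E_i} x'$ with $\varphi_e(x)\ne\varphi_e(x')$ refutes the reduction only if you can keep $\varphi_e(x)$ and $\varphi_e(x')$ $E$-inequivalent forever; but your own speculative collapse branch may already have $E$-collapsed these very images at an earlier stage, when $x$ and $x'$ still lay in distinct $E_{i,s}$-classes. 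Once that happens the restraint is impossible and the collapse is \emph{consistent} with $\varphi_e$ being a reduction, so neither branch has achieved anything. Concretely, take $\varphi_e=\mathrm{id}$ and an opponent who, each time your module collapses $x \rel{E} x'$ for a currently inequivalent pair, responds by putting $x \rel{E_i} x'$ (and does nothing else). If your module acts only finitely often, as your last paragraph asserts, the opponent ends with $E_i$ infinite, every collapsed pair matched in $E_i$, every potential restraint witness already spoiled, and $\mathrm{id}$ a genuine reduction. Priority between modules (your point (i)) cannot fix this, since the conflict is internal to a single module and the offending collapse precedes the appearance of the witness. Relatedly, a collapse-diagonalization succeeds only if $x\cancel{\rel{E_i}}x'$ holds in the \emph{final} $E_i$, which is a $\Pi^0_1$ outcome the module can never certify, so the claim that each module acts finitely often is not something you can arrange by fiat; it has to be \emph{proved} from a convergence argument, which is missing.

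The paper resolves exactly this difficulty by discarding the restraint branch altogether (it explicitly remarks that one does \emph{not} seek evidence of the form $x\rel{E_i}y$ with $\varphi_j(x)\cancel{\rel{E}}\varphi_j(y)$, precisely because such evidence cannot be stabilized) and by making the collapse branch maximally aggressive: while no stable evidence is present, the module $E$-collapses \emph{all} pairs above the restraint threshold $m$ of the higher-priority requirements, and rests whenever evidence (divergence of $\varphi_j$, or a currently $E_i$-inequivalent pair with $E$-collapsed images) is visible. The convergence argument is then a pigeonhole: if the module looped forever, $E$ would have only finitely many classes (everything above $m$ collapsed) while $E_i$ is infinite, so a total $\varphi_j$ could not map $E_i$-classes injectively into $E$-classes, and some inequivalent pair with collapsed images must eventually persist. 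That argument, or something playing its role, is the missing ingredient in your write-up; without it the ``routine verification'' does not go through.
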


\begin{proof}
Let $V=\{i: \textrm{ $E_{i}$ is finite}\}$ be c.e., and let $\{V_{s}: s \in
\omega\}$  be a c.e.~approximation to $V$, that is, a strong array of finite
sets, with $V_{s}\subseteq V_{s+1}$ for every $s$, and $V=\bigcup_{s}V_{s}$.
Let also $\{\phi_{j}: j \in \omega\}$ be an acceptable indexing of the
partial computable functions.

Our desired $E$  must satisfy the following requirements:
\begin{align*}
&P_{n}: &E \text{ has at least } n+1 \text{ classes}, \\
&Q_{\langle i, j \rangle}: &E_{i} \not\leq E
\text{ via } \varphi_j, \text{ or } E_i \text{ is finite}.
\end{align*}
We order the requirements according to the priority ordering:
\[
P_{0}<Q_{0}<\ldots <P_{n}<Q_{n}<\ldots<
\]
We say that $R$ has \emph{higher priority than} $R'$ (or $R'$ has \emph{lower
priority than} $R$) if $R<R'$.

We construct $E$ in stages. At stage $s$ we define an equivalence relation
$E_{s}$, so that: $E_{0}=\Id$ (the identity ceer); for every $s$,
$E_{s}\subseteq E_{s+1}$, $E_{s}$ is a finite extension of $\Id$ (the
identity ceer) and, uniformly in $s$, $E_{s}\smallsetminus \Id$  can be
uniformly presented by its canonical index; and finally $E= \bigcup_{s}
E_{s}$  is our desired equivalence relation. $E_{s+1}$ will be generated by
$E_{s}$ plus finitely many pairs of numbers which are, we say,
\emph{$E$-collapsed} at $s+1$.

The strategy to satisfy $P_{n}$ consists in picking $n+1$ numbers which are
still pairwise $E$-non-equivalent, and restraining their equivalence classes
from future $E$-collapses.

The strategy to satisfy $Q_{\langle i,j \rangle }$ goes as follows. At a
given stage $s$, we say that \emph{evidence appears that $\phi_{j}$ is not a
reduction from $E_{i}$ to $E$} if one of the following happens:
\begin{enumerate}

\item[(A)] $\phi_{j}$ does not look total, i.e.\ we see some witness $v$
    such that $\phi_{j}$ diverges on $v$;

\item[(B)] we see two witnesses $x,y$ such that $\phi_{j}(x)$ and
    $\phi_{j}(y)$ both converge, and at the given stage
    $x\cancel{\rel{E_{i}}} y$, but already $\phi_{j}(x) \rel{E}
    \phi_{j}(y)$.

\end{enumerate}

Notice that, contrary to what one may expect, we do not bother to seek
evidence given by two witnesses $x,y$ such that $\phi_{j}(x)$ and
$\phi_{j}(y)$ both converge, and at the given stage already $x\rel{E_{i}} y$,
but $\phi_{j}(x) \cancel{\rel{E}} \phi_{j}(y)$. Our action on trying to meet
$Q_{\langle i,j \rangle }$ will force the opponent to give up on totality of
$\phi_{i}$, or leave non-$E_{i}$-equivalent two numbers whose
$\phi_{j}$-images we have already $E$-collapsed.

Notice that, independently of our will, evidence due to (A) may be lost at a
later stage $t$, if $\phi_{j,t}(v)\downarrow$; evidence due to (B) may be
lost at a later stage $t$ if $x\rel{E_{i,t}} y$.

Here is the description of our strategy in isolation:
\begin{enumerate}

\item we wait to see $i \in V$; if $i$ gets enumerated into $V$ then the
    requirement is satisfied, so we stop worrying about it, and
    definitively move on to satisfy the lower priority requirements;

\item while waiting to see $i \in V$ or for evidence to appear that
    $\phi_{j}$ is not a reduction, we threaten to make $E$ finite by
    $E$-collapsing all the elements $\ge m$, where $m$ is a threshold
    indicated to $Q_{\langle i,j \rangle }$ by the restraint placed by
    higher priority requirements;

\item while waiting to see $i \in V$, if evidence has appeared that
    $\phi_{j}$ is not a reduction then

\begin{enumerate}

\item while this evidence persists, we move on to satisfy the lower
    priority requirements;
\item when this evidence gets lost,  we loop back to (2).

\end{enumerate}
\end{enumerate}

The outcomes of the strategy are evident: (1) is a finitary outcome
satisfying the requirement, as $E_{i}$ is finite.

If (1) does not show up, then we claim that we cannot loop between (3b) and
(2) infinitely often. For otherwise $\phi_{j}$ would be total, $E$ finite (as
we $E$-collapse all $x,y>m$), but then $\phi_{j}$ cannot be an injective
reduction from the equivalence classes of $E_{i}$ (which is infinite) to the
equivalence classes of $E$ (which would be finite).  Therefore our strategy
eventually stops at (3b) because of (A) (outcome: $\phi_{j}$ is not total),
or because of (B) (outcome: $x \cancel{\rel{E_{i}}}y$ but $\phi_{j}(x)
\rel{E} \phi_{j}(y)$ for some $x,y$).

Since all strategies have finite outcomes, the conflicts between different
strategies are resolved by a straightforward finite priority argument.

\medskip
\paragraph{\emph{The construction}}
At each stage,  requirements may be initialized, and they are so at stage
$0$; or, in case of $Q$-requirements, they may be declared permanently
satisfied in which case they are met once and for all.

The construction makes use at each stage of the following parameters for
every requirement $R$: if $R$ is initialized, then these parameters are
undefined. The parameter $m^{R}(s)$ denotes the restraint imposed at stage
$s$ by $R$, with $R\in \{P,Q\}$, to lower priority requirements, so that they
can only $E$-collapse pairs of elements $x,y > m^{R}(s)$.

The parameter $M(\langle i,j\rangle,s)$ (if $Q_{\langle i,j\rangle}$ is not
initialized, and thus we may suppose $s>0$), is defined as follows: if there
is $v \leq s$ such that either
\begin{enumerate}
\item  $\phi_{j,s}(v) \!\uparrow$, or
\item $v=\langle x, y\rangle$ and $\phi_{j,s}(x)$ and $\phi_{j,s}(y)$ both
    converge and $x\cancel{\rel{E_{i,s}}} y$, but $\phi_{j,s-1}(x)
    \rel{E_{s}} \phi_{j,s-1}(y)$.
\end{enumerate}
then let $M(\langle i,j\rangle,s)=\langle v,0\rangle$ in the former case,
otherwise $M(\langle i,j\rangle,s)=\langle v,1\rangle$. Let $M(\langle
i,j\rangle,s)=\langle 0,2\rangle$ if there exists no such $v$.

If not otherwise specified, at each stage $s>0$ each parameter maintains the
same value as at the previous stage, or stays undefined if it was undefined
at the previous stage.

We say that $P_{n}$ \emph{requires attention at $s+1$} if it is initialized.

We say that $R=Q_{\langle i,j \rangle}$ \emph{requires attention at $s+1$} if
$Q_{\langle i,j \rangle}$ has not as yet been declared permanently satisfied
and (in order):
\begin{enumerate}
\item $Q_{\langle i,j \rangle}$ is initialized; or
\item $i\in V_{s}$; or
\item $M(\langle i,j\rangle,s+1)\ne M(\langle i,j\rangle,s)$.
\end{enumerate}

\medskip
\paragraph{\emph{Stage $0$}}
Initialize all requirements, and set $m^{R}(k,0)$ and $M(k,0)$ undefined for
all $R \in \{P,Q\}$, and $k\in \omega$. Let $E_{0}=\Id$.

\medskip
\paragraph{\emph{Stage $s+1$}}
Let $R$ be the least requirement that requires attention: there is such a
least requirement since almost all requirements are initialized when we begin
stage $s+1$.

\medskip
\paragraph{\emph{Case $1$}}
If $R=P_{n}$ then $R$ is initialized: pick the least $n+1$ numbers bigger
than any number so far used in the construction (thus these numbers are still
non-$E$-equivalent) and let $m^{R}(s+1)$ be the greatest one of  the numbers
which have been picked; $R$ stops being initialized.

\medskip
\paragraph{\emph{Case $2$}}
Suppose that $R=Q_{\langle i,j\rangle}$. We refer to the various cases for
which $R$ may require attention:
\begin{enumerate}
\item[(a)]  (Case (1) of requiring attention) let
    $m^{R}(s+1)=\max\{m^{R'}(s): R'<R\}$ (notice that no $R'<R$ is
    initialized), so that $R$ stops being initialized;
\item[(b)]  (Case (2) of requiring attention) declare $R$ \emph{permanently
    satisfied} (and will stay so forever);

\item[(c)] (Case (3) of requiring attention) $E$-collapse all $x,y$ such
    that $m^{R}(s+1)< x,y \leq s$;

\end{enumerate}
Whatever the case,  initialize all $R'>R$, by setting
$m^{R'}(s+1)\!\!\uparrow$, and $M(\langle i',j'\rangle,s) \!\uparrow$ if
$Q_{\langle i',j'\rangle}>R$.

Let $E_{s+1}$ be the equivalence relation generated by $E_{s}$ plus the pairs
of numbers which have been $E$-collapsed at $s+1$.

\medskip
\paragraph{\emph{Verification}}
The verification is based on the following lemma.

\begin{lemma}
For every requirement $R$, $R$ is initialized only finitely many times,
$m^{R}=\lim_{s}m^{R}(s)$ exists, $R$ eventually stops requiring attention,
and $R$ is met.
\end{lemma}

\begin{proof}
Suppose that the claim is true of every $R'<R$, and let $s_{0}$ be the
greatest stage at which some $R'<R$ has received attention, with $s_{0}=0$ if
$R=P_{0}$. Let $m=\max\{m^{R'}: R'<R\}$.

At the beginning of stage $s_{0}+1$, $R$ is initialized, and thus requires
attention, acts through (1) or (2a), and after this stage it will never be
initialized again.

\medskip
\paragraph{\emph{Case $R=P_{n}$, for some $n$}}
If $R=P_{n}$, then $R$ acts, picks $n+1$ unused numbers. These numbers are
still $E$-non-equivalent. $R$ defines a value of $m^{R}(s_{0}+1)$ which will
never change hereafter, and thus is the limit value $m^{R}$ of $m^{R}(s)$.
This limit value sets a restraint on lower priority requirements which
therefore can never $E$-collapse any pair of these $n+1$ numbers. This shows
also that $P_{n}$ is met, as the final $E$ has at least $n+1$ equivalence
classes.

\medskip
\paragraph{\emph{Case $R=Q_{\langle i,j\rangle}$, for some $i,j$}}
At stage $s_{0} +1$, $Q_{\langle i,j\rangle}$ defines the last value
$m^{R}=m^{R}(s_{0}+1)$ of its parameter $m^{R}$: notice that this value will
never change again, and is in fact the same as $m^{R'}$, where $R'$ is the
$P$-requirement immediately preceding $R$ in the priority ordering. If $R$
receives attention at some stage $s_{1}+1> s_{0}+1$ and acts through Case
(2b), then the action declares $R$ permanently satisfied, $R$ will never
receive attention again, $E_{i}$ is finite then $R$ is met.

If we exclude action Case (2b) after $s_{0}+1$, then $E_{i}$ is infinite. We
claim that still $R$ requires attention finitely many times after $s_{0}+1$.
For otherwise, at infinitely many stages $s$ we $E$-collapse all numbers
$m^{R}< x,s\leq s$, and therefore $E$ is finite since we $E$-collapse all
$x,y >m^{R}$. On the other hand $E_{i}$ is infinite, so $\phi_{j}$ cannot
induce a $1$-$1$ mapping from $E_{i}$-equivalence classes to $E$-equivalence
classes, thus eventually $M(\langle i,j\rangle, s)$ stabilizes on a value
$\langle v, k\rangle$ with $k\in \{0,1\}$ and stops receiving attention
again: contradiction. So (if we never act through  Case (2b)) we are forced
to conclude that $M(\langle i,j \rangle, s)$ stabilizes on some $\langle v,
0\rangle$, and thus $\phi_{j}$ is not total, $R$ is satisfied, and
$\lim_{s}m^{R}(s)=m$; or it stabilizes on some $\langle \langle x,y\rangle,
1\rangle$, in which case $x \cancel{\rel{E_{i}}} y$ and $\phi_{j}(x) \rel{E}
\phi_{j}(y)$, and $R$ is met.
\end{proof}

\end{proof}

\begin{corollary}
No class $\mathfrak{A}$ of finitely generated semigroups is $\equiv$-complete
for the ceers.
\end{corollary}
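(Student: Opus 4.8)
The plan is to deduce this corollary directly from Theorem~\ref{thm:fundamental} by exhibiting, for any class $\mathfrak{A}$ of finitely generated semigroups, a single ceer that witnesses the failure of $\equiv$-completeness. First I would recall that finitely generated semigroups (up to isomorphism) can be enumerated effectively: each one is a quotient of some $F(\{x_1,\dots,x_k\})$ by a c.e.\ congruence, and the congruence is specified by a c.e.\ set of defining relations, so fixing a computable listing of all pairs (number of generators, index of a c.e.\ set of relations) yields a uniformly c.e.\ sequence $\{S_i : i\in\omega\}$ of finitely generated c.e.\ semigroups containing a copy of every member of $\mathfrak{A}$. Passing to word problems, $\{=_{S_i} : i\in\omega\}$ is a uniformly c.e.\ sequence of ceers.

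The key point is then to check that the hypothesis of Theorem~\ref{thm:fundamental} is satisfied, namely that $\{i : =_{S_i}\text{ is finite}\}$ is c.e. A finitely generated semigroup has a finite word problem (finitely many congruence classes) if and only if it is a finite semigroup, and for a finitely presented-style enumeration this is a $\Sigma^0_1$ condition: $=_{S_i}$ has at most $n$ classes iff the c.e.\ congruence eventually collapses all words into $\le n$ classes, which one can detect by waiting until finitely many generators' worth of words of bounded length are seen to be pairwise related appropriately and the relation is closed under the operation — more carefully, $=_{S_i}$ is finite iff there is a stage $s$ and a finite set of words such that every word is $=_{S_i,s}$-equivalent to one of them, and this existential statement is c.e. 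Hence $V=\{i : =_{S_i}\text{ is finite}\}$ is c.e., and Theorem~\ref{thm:fundamental} supplies an infinite ceer $E$ with $=_{S_i}\not\equiv E$ for every $i$. Since every semigroup in $\mathfrak{A}$ appears (up to isomorphism, hence with word problem bi-reducible to some $=_{S_i}$) among the $S_i$, no member of $\mathfrak{A}$ can $\equiv$-realize $E$, so $\mathfrak{A}$ is not $\equiv$-complete for the ceers.

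The main obstacle I expect is the verification that finiteness of the word problem is a c.e.\ condition in the index $i$, uniformly; one has to argue carefully that ``$S_i$ is finite'' is genuinely $\Sigma^0_1$ and not merely $\Sigma^0_2$. The resolution is that a finitely generated semigroup is finite precisely when, for some bound $\ell$, every word over the generators is $=_{S_i}$-equivalent to a word of length $\le\ell$, and — crucially — once the finitely many words of length $\le\ell+1$ have all been observed (at some finite stage) to reduce to words of length $\le\ell$, closure under multiplication forces \emph{every} longer word to do so as well; thus finiteness is witnessed at a finite stage by a finite amount of data, which is exactly a c.e.\ condition. With that in hand the corollary is immediate.
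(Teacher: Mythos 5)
Your proposal is correct and follows essentially the same route as the paper: enumerate all finitely generated c.e.p.\ semigroups uniformly, verify that finiteness of the word problem is a c.e.\ condition on the index by exhibiting a length bound $\ell$ such that all words of length $\le \ell+1$ reduce to shorter ones and propagating this to all words via the congruence property, and then invoke Theorem~\ref{thm:fundamental}. The paper's condition $(\star)$ and its minimal-counterexample verification are exactly your ``observe the finitely many words of bounded length, then closure under multiplication does the rest'' argument.
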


\begin{proof}
Up to computable isomorphisms, we can assume that a finitely generated
c.e.p.~semigroup is of the form $\langle \{0, 1, \ldots, n\}, R\rangle$ where
$R$ is a c.e.~subset of $(\{0, 1, \ldots, n\}^{*})^{2}$. Let $f$ be a
computable function such that $\{V_{f(n,i)}: i \in \omega\}$ computably lists
all c.e.~subsets of $(\{0, 1, \ldots, n\}^{*})^{2}$. From this we get a
computable listing $\{S_{\langle n, i \rangle}: n,i\in \omega\}$ (where
$S_{\langle n, i \rangle}=\langle \{0, 1, \ldots, n\}^{*},
V_{f(n,i)}\rangle$) of all finitely generated c.e.p.~semigroups, and a
corresponding computable listing $\{E_{\langle n, i \rangle}: n,i\in
\omega\}$ of their word problems.

In view of the previous theorem it suffices to show that $\{\langle n,
i\rangle: E_{\langle n, i \rangle} \textrm{ finite}\}$ is c.e.\ Let $X=\{0,1,
\ldots, n\}$. We claim that $E_{\langle n, i \rangle}$ is finite if and only
if
\begin{equation}\tag{$\star$}
\QE{m>0}\QA{\sigma\in X^{*}}[|\sigma|=m \Rightarrow
\QE{\tau \in X^{*}}[|\tau|<|\sigma| \,\&\, \tau \rel{E_{\langle n,i\rangle}}
\sigma]],
\end{equation}
which is a c.e.~expression (in which for a given string $\rho$, the symbol
$|\rho|$ denotes the length of $\rho$). On the one hand,  if $E_{\langle
n,i\rangle}$ is finite, one can fix a finite transversal  $A$ which meets all
the equivalence classes of $E_{\langle n,i\rangle}$. Since each word of
$S_{\langle n,i\rangle}$ is equivalent to a word from $A$, we have that
$(\star)$ holds for $m$, where $m=\max\set{|\sigma| : \sigma \in A}+1$.

On the other hand, assume that $(\star)$ holds, and fix such an $m$. We claim
in this case that every word is $E_{\langle n,i\rangle}$-equivalent to some
word of length $\leq m$. Towards a contradiction, let $n> m$ be the least
number such that there exists $\sigma$ with $|\sigma|=n$, and
$[\sigma]_{E_{\langle n,i\rangle}}$ contains no words of length $\leq m$.
Now, let $\sigma_{0}=\sigma \restriction m$ (i.e., the initial segment of
$\sigma$ of length $m$), and let $\sigma_{1}$ be such that
$\sigma=\sigma_{0}\sigma_{1}$. Then $\sigma_{0}$  is $E_{\langle
n,i\rangle}$-equivalent to some $\rho$  with $|\rho|<m$. Therefore, by
definition of  $\langle X; E_{\langle n,i\rangle}\rangle$, we have that
$\sigma =\sigma_{0}\sigma_{1}E_{\langle n,i\rangle} \rho\,\sigma_{1}$, but
$|\rho\,\sigma_{1}|<n$,  contradicting the minimality of $n$.
\end{proof}

As a particular case, this provides a negative solution to Gao~and~Gerdes'
question:

\begin{corollary}\label{cor:solution}
The class of f.p.~semigroups is not $\equiv$-complete for the ceers.
\end{corollary}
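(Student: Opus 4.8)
The plan is to obtain Corollary~\ref{cor:solution} as an immediate special case of the preceding corollary. Every finitely presented semigroup $\langle X; R\rangle$, with $X$ and $R$ both finite, is in particular finitely generated (by the finite set $X$); hence the class of f.p.~semigroups is one of the classes $\mathfrak{A}$ of finitely generated semigroups covered by that corollary. Since no such class is $\equiv$-complete for the ceers, in particular the class of f.p.~semigroups is not. So the proof reduces to the single observation that ``finitely presented'' entails ``finitely generated'', which is exactly what lets us restrict the earlier, more general statement.

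If a self-contained argument is preferred, one re-runs the proof of the preceding corollary with the listing restricted to finite presentations. That is: fix a computable enumeration $\{S_{k}: k \in \omega\}$ of all semigroups of the form $\langle \{0,1,\dots,n\}^{*}, R\rangle$ with $R$ a \emph{finite} subset of $(\{0,1,\dots,n\}^{*})^{2}$, together with the induced computable listing $\{E_{k}: k \in \omega\}$ of their word problems. The same c.e.~equivalent $(\star)$ of the finiteness of $E_{k}$ used there applies verbatim, so $\{k: E_{k} \text{ is finite}\}$ is c.e.; then Theorem~\ref{thm:fundamental} yields an infinite ceer $E$ with $E_{k}\not\equiv E$ for every $k$, hence $E$ is $\equiv$-realized by no f.p.~semigroup.

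There is essentially no obstacle left at this point: all the substantive work is done in Theorem~\ref{thm:fundamental} and the preceding corollary, and what remains is only the routine remark that a finite presentation is finitely generated. I would therefore present the proof in one line, deriving it from the previous corollary, and at most add a parenthetical pointing to the alternative direct route via Theorem~\ref{thm:fundamental} for readers who want to see the f.p.~case handled explicitly.
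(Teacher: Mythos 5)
Your proposal is correct and matches the paper's approach exactly: the paper also derives the corollary as an immediate consequence of the preceding result on finitely generated semigroups, since every finitely presented semigroup is finitely generated. The paper's proof is simply the word ``Immediate,'' so your one-line derivation (and optional aside) is, if anything, more explicit than the original.
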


\begin{proof}
Immediate.
\end{proof}

Next, we observe that, given a f.p.~semigroup $S$, the number of finite and
infinite equivalence classes of the word problem $=_S$ gives us some
information about the ceers realized by $S$. We basically owe the following
arguments to \cite{Bokut-Kukin}, see also~\cite{Litvinceva}.

\begin{lemma}\label{lem:tree-like}
If $S$ is a f.p.~semigroup then there is a partial computable function $\psi$
such that for every word $w$, $\psi(w)\downarrow$ if and only if the
$=_S$-equivalence class of $w$ is finite, and, when convergent, $\psi(w)$
outputs the canonical index of the equivalence class $[w]_{=_S}$ of $w$.
\end{lemma}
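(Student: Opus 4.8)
The plan is to exploit finite presentability to get a decision procedure, running in parallel over all words, that detects when an equivalence class has stopped growing. Let $S=\langle X;R\rangle$ be a finitely presented semigroup, so $X=\{x_1,\dots,x_k\}$ is finite and $R=\{(u_1,v_1),\dots,(u_\ell,v_\ell)\}$ is a finite set of defining relations. The key combinatorial observation is that the congruence $=_S$ generated by $R$ on the free semigroup $X^+$ is generated by \emph{elementary moves}: $w \to w'$ where $w = p u_t q$ and $w' = p v_t q$ (or the reverse) for some $t \le \ell$ and some words $p,q$ (possibly empty, in the monoid case; in the semigroup case one has the usual finitely many cases). Two words are $=_S$-equivalent iff one is obtained from the other by a finite chain of such moves. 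Crucially, a single elementary move changes the length of a word by at most $c := \max_t \bigl||u_t|-|v_t|\bigr|$, so the set of words reachable from $w$ by $N$ moves has length bounded by $|w| + Nc$.

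First I would define the partial function $\psi$ by the following semidecision procedure on input $w$: enumerate, in a breadth-first manner, the set $C_w$ of all words $=_S$-equivalent to $w$, by starting from $w$ and repeatedly applying all possible elementary moves. At each stage $s$ we have a finite set $C_w^{(s)} \subseteq C_w$ of words found so far, and we halt and output the canonical index of $C_w^{(s)}$ precisely when $C_w^{(s)}$ is closed under elementary moves — i.e.\ applying every elementary move to every word of $C_w^{(s)}$ produces no new word. Note that closure under elementary moves is a \emph{decidable} condition on a finite set of words, since there are only finitely many relations and, for each word $z \in C_w^{(s)}$, only finitely many factorizations $z = pu_tq$ to check; so the procedure is effective.

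Next I would verify the two required properties. If $[w]_{=_S}$ is finite, then $C_w$ itself is a finite set closed under elementary moves, and the breadth-first enumeration will eventually have found all of it; at that stage $C_w^{(s)} = C_w$ is closed, so $\psi(w)\downarrow$ and outputs the canonical index of $[w]_{=_S}$, as required. Conversely, if $\psi(w)\downarrow$, then at the halting stage $C_w^{(s)}$ is a finite set containing $w$, closed under elementary moves; since $=_S$-equivalence is exactly connectedness under elementary moves, $C_w^{(s)}$ must be a union of $=_S$-classes, and as it contains $w$ it equals $[w]_{=_S}$, which is therefore finite — and the output is its canonical index. The main obstacle is getting the ``halt exactly when closed'' step right: one must be careful that closure of a finite set under elementary moves is genuinely decidable (this is where finite presentability is essential — with a merely c.e.\ presentation one could not check closure) and that the enumeration of $C_w$ is complete, which is where the length bound $|w|+Nc$ is reassuring, though it is not strictly needed once one sees that breadth-first search over all factorizations exhausts $C_w$ in the limit. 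A secondary subtlety is the distinction between the semigroup and monoid cases in enumerating factorizations $z = pu_tq$, but this is routine bookkeeping.
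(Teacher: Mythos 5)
Your proposal is correct and follows essentially the same route as the paper: both generate $[w]_{=_S}$ by exhaustively applying the finitely many relations (the paper organizes this as a finitely branching tree and invokes K\"onig's Lemma, you as a breadth-first search with an explicit closure test), and both halt exactly when no new words are produced, which happens if and only if the class is finite. Your observation that closure of a finite set under elementary moves is decidable is precisely the point the paper makes by noting that each relation produces only finitely many children.
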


\begin{proof}
Given a word $w$, we can effectively generate its $=_S$-equivalence class in
a treelike fashion as follows. The root of the tree is $w$. Each node $u$ has
as children the words that can be obtained from $u$ using the relations and
which have not yet appeared as a node in the path from the root to the
present node. Note that one relation produces only finitely many children,
and there are only finitely many relations: hence, this is a finitely
branching tree. By the K\"onig Lemma if the equivalence class of $w$ is
finite, we eventually stop generating new nodes on any branch of the tree:
when this happen we have generated the entire equivalence class of $w$, and
we can compute the canonical index of this class.
\end{proof}

\begin{theorem}
Let $S$ be a f.p.~semigroup. 	
\begin{enumerate}[(i)]
\item If $S$ has finitely many infinite equivalence classes, then $=_S$ is
    decidable. Therefore no undecidable ceer can be $\equiv$-realized by
    such an $S$.
\item If $S$ has infinitely many finite equivalence classes, then $=_S$ is
    light. Therefore, neither finite nor dark ceers can be
    $\equiv$-realized by such an $S$.
\end{enumerate}
\end{theorem}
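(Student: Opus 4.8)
The plan is to build everything on Lemma~\ref{lem:tree-like}, which supplies a partial computable function $\psi$ that converges on a word $w$ exactly when $[w]_{=_S}$ is finite and, in that case, returns the canonical index of this finite class.

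For part (i), let $C_1,\dots,C_k$ be the (finitely many) infinite $=_S$-classes. Since this is a fixed property of the fixed semigroup $S$, I may hard-code representatives $a_\ell\in C_\ell$, and since each $C_\ell=[a_\ell]_{=_S}$ is c.e.\ I may uniformly enumerate $C_1,\dots,C_k$. To decide whether $u=_S v$, I would run in parallel the computation $\psi(u)$ and the enumerations of $C_1,\dots,C_k$ until either $\psi(u)$ converges (revealing $[u]_{=_S}$ as an explicit finite set) or $u$ turns up in some $C_\ell$ (revealing $[u]_{=_S}=C_\ell$). Exactly one of these happens, and after finitely many steps: if $[u]_{=_S}$ is infinite it must be one of the $C_\ell$ and then $\psi(u)\!\uparrow$, while if $[u]_{=_S}$ is finite then $\psi(u)\!\downarrow$ and $u$ lies in none of the $C_\ell$. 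Doing the same for $v$ and then comparing the two outcomes (two finite sets, two indices, or a finite set against an index) decides $u=_S v$. Hence $=_S$ is decidable, and since $R\leq {=_S}$ (which holds whenever $R\equiv {=_S}$) forces $R$ decidable, no undecidable ceer is $\equiv$-realized by such an $S$.

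For part (ii), I would construct an infinite c.e.\ transversal $W$ for $=_S$ by dovetailing the computations $\psi(w)$ over all words $w$. At stage $s+1$ I search for the least word $w$ such that $\psi(w)$ has converged by then, $w\notin W_s$, and the finite class coded by $\psi(w)$ is disjoint from the finite class coded by $\psi(w')$ for every $w'\in W_s$ (a decidable test, as these are finite sets already computed); if such a $w$ is found, I enumerate it into $W$. By construction distinct elements of $W$ lie in distinct $=_S$-classes, so $W$ is a transversal, and it is visibly c.e. The step I expect to be the main obstacle is verifying that $W$ is infinite: if $W$ stabilized to $\{v_1,\dots,v_n\}$ with finite classes $D_1,\dots,D_n$, then, because $S$ has infinitely many finite classes, there would be a finite class $C$ disjoint from $D_1\cup\dots\cup D_n$; any $w\in C$ has $\psi(w)$ convergent, so from some stage on $w$ meets all three search conditions and must be enumerated into $W$ — a contradiction.

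Thus $=_S$ has infinitely many classes and an infinite c.e.\ transversal, i.e.\ it is light. To finish, I would recall that both the number of classes and the existence of an infinite c.e.\ transversal are invariant under bi-reducibility — the latter having already been used in the proof of Theorem~\ref{thm:non-periodic-semigroups}; concretely, if $=_S\leq R$ via $f$ then $f$ is injective on $W$ and $f(W)$ is an infinite c.e.\ transversal of $R$. Consequently no ceer $\equiv$-realized by such an $S$ can be finite (it would have too few classes) or dark (it would inherit an infinite c.e.\ transversal).
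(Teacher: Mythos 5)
Your proposal is correct and follows essentially the same route as the paper: both parts rest on Lemma~\ref{lem:tree-like}, with part (i) deciding $u=_S v$ by racing the finite-class detector $\psi$ against recognition of membership in the finitely many hard-coded infinite classes, and part (ii) greedily assembling an infinite c.e.\ transversal from pairwise disjoint finite classes exactly as in the paper's stagewise construction. The only differences are cosmetic (you enumerate the infinite classes and wait for $u$ rather than generating $[u]_{=_S}$ and waiting for a representative, and you put $w$ itself into the transversal rather than the least element of $D_{\psi(w)}$), and you supply the infinitude argument for the transversal that the paper leaves implicit.
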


\begin{proof}
Suppose that $=_{S}$ has only finitely many infinite equivalence classes.
Assume that $\{v_i: i \in I\}$ is a finite set of words, with $v_i \neq_S
v_j$ if $i \ne j$, spanning these infinite equivalence classes. Given words
$x,y$, generate the equivalence classes of $x$ and $y$ in a tree-like fashion
as in the proof of Lemma~\ref{lem:tree-like}, until one of the following
happens: (1) $[x]_{=_{S}}$ and $[y]_{=_{S}}$ cannot grow any more (and we can
decide this, as explained in the proof Lemma~\ref{lem:tree-like}); (2) some
$v_i$ is generated in one equivalence class, and some $v_j$ is generated in
the other one; (3) some $v_i$ is generated in one of the two equivalence
classes and the other one has stopped (again, we can decide this latter
outcome). In any case we can decide if the two words are equal. This proves
statement (i).

Now, we prove (ii). Let $S$ be a f.p.~semigroup with infinitely many finite
equivalence classes and let $\psi$ be the partial computable function of
Lemma~\ref{lem:tree-like}. Using $\psi$ we can build in stages an infinite
c.e.~transversal $\{a_0, a_1, \dots\}$ for $=_S$:
	
\medskip
	
\paragraph{\emph{Step $0$}}
Let $w_0$ be the first word such that $\psi(w_0)\downarrow$ and define $a_0$
to be the least element of the finite set $D_{\psi(w_0)}$.
	
\medskip
	
\paragraph{\emph{Step $n+1$}}
Let $w_{n+1}$ be the first word such that $\psi(w_{n+1}) \downarrow$ and
\[
D_{\psi(w_{n+1})} \cap (\bigcup_{i\le n} D_\psi(w_{i}))=\emptyset
\]
and let $a_{n+1}$ be the least element of $D_{\psi(w_{n+1})}$.
\end{proof}

\section{Classes of algebras $\simeq_{\mathrm{s}}$-realizing provable equivalence of Peano Arithmetic}\label{sct:last}

Although by Fact~\ref{Gao-Gerdes-on-groups} there are ceers $R$ such that $R
\rel{ \not{\equiv}} =_G$, for every c.e.~group $G$, it is known that there
are f.p.~groups $G$ such that $=_G$ is universal. This was first proved by
Miller~III~\cite{MillerIII-decision}. Another example, due to
\cite{Nies-Sorbi} refers to the computability theoretic notion of effective
inseparability. We recall that a disjoint pair $(U,V)$ of sets of numbers is
\emph{effectively inseparable} (\emph{e.i.}) if there exists a partial
computable function $\psi$ such that for each pair $(u,v)$, if $U \subseteq
W_{u}$  and $V \subseteq W_{v}$ and $W_{u} \cap W_{v}=\emptyset$ then
$\psi(u,v)$ converges and $\psi(u,v) \notin W_{u} \cup W_{v}$.  A f.p.~group
$G$ is built in \cite{Nies-Sorbi} such that $=_G$ is \emph{uniformly
effectively inseparable} i.e.~uniformly in $x,y$ one can find an index of a
partial recursive function $\psi$ witnessing that the pair of sets
$([x]_{=_{G}}, [y]_{=_{G}})$ is e.i., if $[x]_{=_{G}} \cap [y]_{=_{G}} =
\emptyset$. Such a f.p.~group has universal word problem, since it is known
(\cite{ceers}) that every uniformly effectively inseparable ceer is
universal.

An important $\simeq_{\mathrm{s}}$-type among the universal ceers is given by
the $\simeq_{\mathrm{s}}$-type of the relation $\sim_T$ of provable
equivalence of any consistent formal system $T$ extending Robinson's system
$Q$ (see for instance Smorynski~\cite{Smorynski-logical} for an introduction
to formal systems of arithmetic), i.e. $x \sim_T y$ if (identifying sentences
with numbers through a suitable G\"odel numbering) $T\vdash x \leftrightarrow
y$. For example, let us take $T$ to be Peano Arithmetic.

The question naturally arises as to which algebras
$\simeq_{\mathrm{s}}$-realize $\sim_T$. Notice that by
Fact~\ref{fact:iso-infinite}, ``$\simeq_{\mathrm{s}}$-realizing $\sim_T$'' is
equivalent to ``$\simeq$-realizing $\sim_T$''. Here are some initial remarks
about this question:

\begin{enumerate}
  \item As far as we know, the question of whether there are
      f.p.~semigroups, or f.p.~groups, having word problems strongly
      isomorphic to $\sim_T$ is still open.
  \item On the other hand, by Theorem~\ref{thm:fund1} there exist
      c.e.~semigroups whose word problem is strongly isomorphic to
      $\sim_T$. We do not know if there are c.e.~groups
      $\simeq_{\mathrm{s}}$-realizing $\sim_T$.
  \item If one computably identifies with numbers the sentences of our
      chosen formal system $T$, and considers the computable operations
      provided by the connectives $\wedge$, $\lor$, $\neg$, $\bot$, $\top$
      (where $\bot$ and $\top$ denote any contradiction and any theorem,
      respectively), then
   \[
   \langle \omega, \wedge, \lor, \neg, \bot, \top, E \rangle
   \]
   (where $x \rel{E} y$ if $T \vdash x \leftrightarrow y$) is a positive
   presentation of the Lindenbaum algebra of the sentences of $T$, which is
   therefore a c.e.~Boolean algebra. It is known that the word problem of
   this c.e.~Boolean algebra is strongly isomorphic to $\sim_T$: see
\cite{Pour-El-Kripke} (see also \cite{Montagna-Sorbi:Universal}).
\end{enumerate}

The above item (3) identifies a very special class of c.e.~rings which
$\simeq_{\mathrm{s}}$-realize $\sim_T$, namely Boolean rings, i.e.~rings
satisfying $x^2=x$ for all $x$. Is that all? Can we find non-Boolean
c.e.~rings $\simeq_{\mathrm{s}}$-realizing $\sim_T$? We will identify in the
following a c.e.~ring $R$ which is neither Boolean nor commutative, such that
$=_R\simeq_{\mathrm{s}} \sim_T$.

\smallskip

The $\simeq_{\mathrm{s}}$-type of $\sim_T$ can be characterized through the
already given notion of a diagonal function, and the notion of
\emph{uniformly finite precompleteness}, due to \cite{Montagna:ufp}.

\begin{emdef}\label{def:ufp}
A nontrivial ceer $S$ is \emph{uniformly finitely precomplete} (abbreviated
as \emph{u.f.p.})~if there exists a computable function of three variables
$f(D,e,x)$ (where $D$ is a finite set given by its canonical index) such that
\[
(\forall D,e,x)[\phi_e(x) \downarrow \,\&\, (\exists y)[y \in D \,\&\,
\phi_e(x) \rel{S} y] \Rightarrow \phi_e(x)\rel{S} f(D,e,x)].
\]
\end{emdef}

Then we have:

\begin{fact}\label{fact:Montagna-caratterizzazione}
For every ceer $S$, $S \simeq_{\mathrm{s}} \sim_T$ if and only if $S$ is
u.f.p.~and possesses a diagonal function.
\end{fact}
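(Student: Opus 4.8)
The plan is to handle the two directions of the biconditional separately; the forward direction is routine, while the backward one carries all the content.

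\emph{Forward direction.} Suppose $S \simeq_{\mathrm{s}} \sim_T$ via a computable permutation $h$. I would use two standard facts about $\sim_T$: it is u.f.p., and $\neg$ (negation) is a diagonal function for it. The first holds because, whenever $\phi_e(x)\!\downarrow$ is $T$-provably equivalent to a sentence in the finite set $D$, one can produce, by the G\"odel fixed point lemma together with the $\Sigma_1$-completeness of $T$, a sentence $f(D,e,x)$ that internalises the search for the first member of $D$ matched by $\phi_e(x)$ and is therefore $T$-provably equivalent to $\phi_e(x)$; see \cite{Montagna:ufp,Pour-El-Kripke} and the survey~\cite{Andrews-Badaev-Sorbi}. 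The second holds because consistency of $T$ gives $T \nvdash \varphi \leftrightarrow \neg\varphi$ for every sentence $\varphi$. Both properties transfer along $\simeq_{\mathrm{s}}$ by conjugation with $h$: the map $x \mapsto h^{-1}(\neg h(x))$ is a diagonal function for $S$, and $(D,e,x) \mapsto h^{-1}(f(h[D],e',x))$ is a u.f.p.\ function for $S$, where $e'$ is an index, computable from $e$, for the partial function $z \mapsto h(\phi_e(z))$.

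\emph{Backward direction.} Assume $S$ is u.f.p., say with u.f.p.\ function $f$, and has a diagonal function $d$. By the forward direction $\sim_T$ enjoys the same two properties, so it suffices to prove the general statement that \emph{any two u.f.p.\ ceers possessing a diagonal function are strongly isomorphic}. The first ingredient I would establish is an \emph{escape lemma}: such a ceer $S$ admits a computable function $g$ with $g(D) \cancel{\rel{S}} b$ for every $b$ in the finite set $D$ (coded by its canonical index). To obtain it, use the Recursion Theorem with parameters to fix a computable map $D \mapsto e(D)$ such that $\phi_{e(D)}$ is the constant function with value $d(f(D,e(D),0))$, and set $g(D) := d(f(D,e(D),0))$. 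Then $\phi_{e(D)}(0)\!\downarrow\,=g(D)$; if we had $g(D) \rel{S} b$ for some $b \in D$, the defining property of $f$ would yield $g(D) \rel{S} f(D,e(D),0)$, i.e.\ $d(f(D,e(D),0)) \rel{S} f(D,e(D),0)$, contradicting that $d$ is diagonal. (Recall also that u.f.p.\ ceers have only infinite classes, consistently with the target $\sim_T$; by Fact~\ref{fact:iso-infinite} one could even settle for producing a non-strong isomorphism, but the construction below directly yields a permutation.)

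\emph{The back-and-forth.} I would then build a computable permutation $h$ of $\omega$ reducing $S_0 := S$ to $S_1 := \sim_T$ by a stage construction in the style of Pour-El and Kripke~\cite{Pour-El-Kripke}, with corrections supplied by the u.f.p.\ functions $f_0,f_1$ and the escape functions $g_0,g_1$ of $S_0$ and $S_1$. At each stage one holds a finite injection and alternately forces the next integer into its domain (``forth'') or into its range (``back''); when an element $n$ enters the domain, $h(n)$ must be given a value whose $S_1$-class will eventually coincide with the $S_1$-class of $h(m)$ for exactly those already-treated $m$ with $n \rel{S_0} m$, and otherwise be fresh. Since the relevant $S_0$- and $S_1$-equivalences are only enumerated over time, this value cannot be read off immediately; instead one sets $h(n) := f_1(D,e,0)$, where $D$ collects the classes reserved so far (enlarged via $g_1$ by a fresh reserve) and $e$ (obtained by the Recursion Theorem) indexes the search that tracks which $h(m)$ the element $n$ turns out $S_0$-equivalent to --- so that $f_1$ forces $h(n)$ into the correct $S_1$-class as soon as a match appears, while $g_1$ keeps the genuinely new classes disjoint from the old ones; symmetric bookkeeping on the ``back'' steps secures surjectivity and the reverse reduction. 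I expect the main obstacle to be precisely this bookkeeping: verifying that the finitely many ``commitments'' made about each value of $h$, once passed through the u.f.p.\ functions of both ceers, genuinely force the completed $h$ to be a reduction in both directions while remaining a permutation. This is the abstract counterpart of the way the G\"odel fixed point repairs the Pour-El--Kripke construction, and is exactly what u.f.p.\ — together with the permanent availability of fresh classes — is designed to deliver; see \cite{Montagna:ufp}.
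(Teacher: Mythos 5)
The paper does not actually prove this Fact: it is quoted as a known characterization due to Montagna \cite{Montagna:ufp} (see also the survey \cite{Andrews-Badaev-Sorbi}), so your proposal has to be measured against the standard argument in the literature rather than against anything in the text. Your forward direction is correct (both properties are plainly invariant under conjugation by a computable permutation, and $\sim_T$ has them). Your ``escape lemma'' is also exactly right, and it is the key self-referential use of u.f.p.: defining $\phi_{e(D)}(0)$ to output $d(f(D,e(D),0))$ makes the hypothesis of the u.f.p.\ implication self-defeating, so $g(D)=d(f(D,e(D),0))$ avoids every class meeting $D$.

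The gap is in the back-and-forth, precisely in the case where the new element $n$ lies in a genuinely new $S_0$-class. You set $h(n):=f_1(D,e,0)$ with $\phi_e(0)$ described only as ``the search that tracks which $h(m)$ the element $n$ turns out $S_0$-equivalent to'', and you claim that enlarging $D$ by the fresh reserve supplied by $g_1$ ``keeps the genuinely new classes disjoint from the old ones''. It does not: if $n$ is $S_0$-inequivalent to every treated $m$, then your $\phi_e(0)$ diverges, the u.f.p.\ implication is vacuous, and $f_1(D,e,0)$ is an uncontrolled number that may perfectly well be $S_1$-equivalent to some old $h(m)$, destroying the reduction; merely placing $c=g_1(\{h(m_1),\dots,h(m_k)\})$ into $D$ changes nothing when $\phi_e(0)\!\uparrow$. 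The missing idea is the same self-reference you already exploited in the escape lemma: $\phi_e(0)$ must watch \emph{two} c.e.\ events and act on whichever is observed first --- (i) $n\rel{S_0}m$ for some treated $m$, in which case it outputs $h(m)$ and u.f.p.\ forces $h(n)\rel{S_1}h(m)$; and (ii) $f_1(D,e,0)\rel{S_1}h(m)$ for some treated $m$, in which case it outputs $c$, whence u.f.p.\ would force $f_1(D,e,0)\rel{S_1}c$ and hence $c\rel{S_1}h(m)$, contradicting freshness of $c$. Thus (ii) can never occur first, and if neither event occurs then $h(n)$ really does land in a new class. With that repair (and its mirror image on the ``back'' steps) the construction goes through; your fallback to Fact~\ref{fact:iso-infinite} for upgrading to a permutation is legitimate, since a short u.f.p.\ argument shows every class of such an $S$ is infinite.
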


The rest of the section is devoted to seeing that there is a non-commutative
and non-Boolean c.e.~ring whose word problem is strongly isomorphic to
$\sim_T$. The following result is essentially a rephrasing of Theorem~4.1
of~\cite{pre-lattices}.

\begin{lemma}\label{lem:from-ei-to-ufp}
Let $A$ be a c.e.~algebra whose type contains two binary operations $+,
\cdot$, and two constants $0,1$ such that $+$ is associative, the pair $(U_0,
U_1)$ is e.i., where
\[
U_i=\{x: x =_A i\},
\]
and, for every $a$,
\[
a+0=_A a, \qquad a\cdot 0=_A 0, \qquad a\cdot 1=_A a.
\]
Then $=_A$ is a u.f.p.~ceer.
\end{lemma}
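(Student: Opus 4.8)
The plan is to build directly a computable function $f(D,e,x)$ witnessing the u.f.p.\ property of $=_A$, using the effective inseparability of the pair $(U_0,U_1)$ together with the additive and multiplicative identities in the statement. The idea is the standard one for passing from effective inseparability to (uniform finite) precompleteness: given $\phi_e(x)$, we want an element that is provably in $[\phi_e(x)]_{=_A}$ whenever $\phi_e(x)$ is $=_A$-equivalent to some member of the finite set $D$, and for this we want to fabricate, from $e$ and $x$ and $D$, a term that behaves like ``$0$ if $\phi_e(x)\downarrow$ and is $=_A$-equivalent to something in $D$, and like $1$ otherwise'', or vice versa, and then multiply $\phi_e(x)$ (shifted by a suitable element of $D$) by this term.

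First I would recall, from the proof of Theorem~4.1 of \cite{pre-lattices}, the following consequence of e.i.: since $(U_0,U_1)$ is e.i., there is a computable function $g$ such that for all $u,v$, whenever $W_u,W_v$ are disjoint c.e.\ supersets of $U_0,U_1$ respectively, $g(u,v)\notin W_u\cup W_v$; from this, by a fixed-point argument, one obtains a productive-style diagonalization allowing one to pass effectively from a partial computable process to an element of $\omega$ whose $=_A$-class is ``steered'' by the process. Concretely, I would, uniformly in $e,x$, define two c.e.\ sets: one enumerating all $z$ that are eventually seen $=_A$-equivalent to $0$ together with all $z$ for which the process $\phi_e(x)$ halts with value $=_A$-equivalent to some element of $D$ (suitably coded), the other enumerating all $z$ that are $=_A$-equivalent to $1$; apply $g$ (via the recursion theorem, so that the index is known in advance) to get an element $t=t(D,e,x)$ which, by construction, satisfies $t=_A 1$ if $\phi_e(x)\downarrow$ and $\phi_e(x)=_A d$ for some $d\in D$, and $t=_A 0$ otherwise — or more precisely, $t$ falls outside both forbidden classes in a way that, combined with $a\cdot 0=_A 0$ and $a\cdot 1=_A a$, lets us correct $\phi_e(x)$ back into its own class.

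Then I would set
\[
f(D,e,x)=\bigl(\text{some fixed } d_0\in D\bigr) + \phi_e(x)\cdot t(D,e,x),
\]
(using associativity of $+$ and $a+0=_A a$ to simplify) and check the required implication: if $\phi_e(x)\downarrow$ and $\phi_e(x)=_A y$ for some $y\in D$, then $t=_A 1$, so $\phi_e(x)\cdot t=_A \phi_e(x)$ and, after the additive correction, $f(D,e,x)=_A \phi_e(x)$; the other cases are vacuous for the u.f.p.\ requirement. One must be careful that $f$ is total computable even when $\phi_e(x)\uparrow$ — this is handled because $t(D,e,x)$ is produced by a total computable procedure (the e.i.\ function $g$ is total), and the term-forming operations $+,\cdot$ on the positive presentation of $A$ are computable and total on $\omega$; the ceer $=_A$ only enters in the verification, not in the definition of $f$. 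The main obstacle I expect is getting the diagonalization bookkeeping exactly right so that $t$ lands in the correct $=_A$-class (not merely outside the two bad ones): effective inseparability only guarantees avoidance of $U_0\cup U_1$, so one needs the extra structure — the constants $0,1$ being actual elements of the algebra and the identities $a\cdot 0=_A 0$, $a\cdot 1=_A a$ — to convert ``avoids both classes'' into ``usable as a switch''. This is precisely the point where the hypotheses beyond bare e.i.\ are used, and where I would follow \cite{pre-lattices} most closely.
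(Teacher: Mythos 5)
Your overall strategy (recursion theorem plus effective inseparability to manufacture a switch element forced into $[1]_{=_A}$ or $[0]_{=_A}$, then exploit $a\cdot 1=_A a$ and $a\cdot 0=_A 0$) is the right one, and is indeed how the paper proceeds. But your concrete definition of $f$ fails on two counts. First, $f(D,e,x)=d_0+\phi_e(x)\cdot t(D,e,x)$ is not total: $\phi_e(x)$ occurs as an actual factor of the output term, so when $\phi_e(x)\uparrow$ the value $f(D,e,x)$ is undefined, whereas Definition~\ref{def:ufp} requires $f$ to be a total computable function. Your remark that totality ``is handled'' addresses only the totality of $t$ and of the algebra operations, not of $\phi_e(x)$ itself. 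The paper's fix is precisely to keep $\phi_e(x)$ out of the output term: it uses one switch $c_{d}$ for each $d\in D$ and outputs $\sum_{d\in D} d\cdot c_{d}$, a term computable in advance from $D,e,x$ alone; in the convergent case the switch attached to the first $d_0$ with $\phi_e(x)=_A d_0$ is forced into $[1]_{=_A}$ and all the others into $[0]_{=_A}$, so the sum is $=_A d_0 =_A \phi_e(x)$. Second, even granting convergence, your additive correction does not work: if $t=_A 1$ then your $f(D,e,x)=_A d_0+\phi_e(x)$, and nothing in the hypotheses makes the summand $d_0$ disappear --- $a+0=_A a$ only kills summands that are $=_A 0$, and $d_0$ is an arbitrary element of $D$, not $0$.

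A smaller but still real gap is the mechanism for landing the switch in the \emph{correct} class rather than merely outside both. Effective inseparability gives a total productive function $p$ with $p(u,v)\notin W_u\cup W_v$ whenever $W_u,W_v$ are disjoint supersets of $U_0,U_1$; to force $p(u,v)$ into $U_1$ one enumerates $p(u,v)$ itself into $W_u$ (on top of $U_0$) when the wait terminates, so that disjointness of $W_u$ and $W_v$ must fail, and since $U_0\cap U_1=\emptyset$ the only possible point of intersection is $p(u,v)\in U_1$. Your description of the two c.e.\ sets (``all $z$ for which the process halts'') does not pin this down, and you explicitly defer it to the reference; since this is the heart of the argument, it needs to be spelled out rather than cited.
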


\begin{proof}
For the convenience of the reader, we recall the argument in
\cite{pre-lattices}, adapting it to our context and notations. We  look for a
computable function $f(D,e,x)$ such that if $\phi_e(x) \downarrow$, and
$\phi_e(x)=_A d$ for some $d \in D$ then $f(D,e,x)=_A \phi_e(x)$.

Let $p$ be a productive function for the pair $(U_0, U_1)$: it is well known
that we may assume that $p$ is total. Let
\[
\{u_{d,D,e, x}, v_{d,D,e,x}: \text{ $D$ finite subset of $\omega$, }
d\in D, e,x \in \omega\}
\]
be a computable set of indices we control by the Recursion Theorem. For a
pair $(u_{d,D,e,x}, v_{d,D,e,x})$ in this set let
$c_{d,D,e,x}=p(u_{d,D,e,x},v_{d,D,e,x})$ and $a_{d,D,e,x}=d \cdot
c_{d,D,e,x}$. Define
\[
f(D,e,x)=\sum_{d \in D}a_{d,D,e,x}.
\]
Let us define two c.e.~sets $W_{u_{d,D,e,x}}$ and $W_{v_{d,D,e,x}}$ for each
$d\in D$, which are computably enumerated as follows. Wait for $\phi_e(x)$ to
converge to some $y$ which is $=_A$ to some element in $D$, and while
waiting, we let $W_{u_{d,D,e,x}}$ and $W_{v_{d,D,e,x}}$ enumerate $U_0$ and
$U_1$, respectively. If we wait forever then for all $d \in D$ we end up with
$W_{u_{d,D,e,x}}=U_0$ and $W_{v_{d,D,e,x}}=U_1$. If the wait terminates, let
$d_0\in D$ be the first seen so that $\phi_e(x)=_A d_0$, enumerate also
$c_{d_0,D,e,x}$ into $W_{u_{d_0,D,e,x}}$: this ends up with
$W_{u_{d_0,D,e,x}}=U_0 \cup \{c_{d_0,D,e,x}\}$ and $W_{v_{d_0,D,e,x}}=U_1$,
thus forcing $c_{d_0,D,e,x} =_A 1$ (since it must be that $W_{u_{d_0,D,e,x}}
\cap W_{v_{d_0,D,e,x}}\ne \emptyset$, for otherwise
$c_{d_0,D,e,x}=p(u_{d_0,D,e,x},v_{d_0,D,e,x}) \in W_{u_{d_0,D,e,x}} \cup
W_{v_{d_0,D,e,x}}$, a contradiction) and thus $a_{d_0,D,e,x}=_A d_0$. For all
$d \in D$ with $d \ne d_0$, we let $W_{u_{d,D,e,x}}=U_0$ and
$W_{v_{d,D,e,x}}=U_1 \cup \{c_{d,D,e,x}\}$: this forces $c_{d,D,e,x} =_A 0$
and thus $a_{d,D,e,x}=_A 0$ for each such $d$. Therefore $f(D,e,x)=\sum_{d
\in D}a_{d,D,e,x} =_A  d_0 =_A \phi_e(x)$.
\end{proof}

In order to prove the existence of a ring with the desired properties, let us
first recall the notion of \emph{free ring}. For more details on the
following construction see for instance  paragraph IV.2 of \cite{Cohn}.

Let $R$ be a ring and $M$ be a monoid. The \emph{monoid ring} of $M$ over
$R$, denoted $RM$, is the set
\[
\left\lbrace \varphi: M \rightarrow R: \ \text{supp}(\varphi)
\ \text{is finite}  \right\rbrace,
\]
where $\text{supp}(\varphi)=\left\lbrace m \in M: \ \varphi(m)\neq 0
\right\rbrace$, equipped with the following operations. Given $\varphi, \psi
\in RM$, their \emph{sum} is the function $\varphi + \psi: M \rightarrow R$
given by
\[
\left(\varphi + \psi \right)(m) = \varphi(m)+\psi(m),
\]
and their \emph{product} is the function $\varphi\psi : M \rightarrow R$
given by
\[
\left(\varphi\psi\right)(m)=\sum_{hk=m}\varphi(h)\psi(k).
\]

\begin{remark} \label{normalform}
Equivalently, as is easily seen, $RM$ is the set of formal sums
\[
\sum_{m \in M} r_mm,
\]
where $r_m \in R$, $m \in M$ and $r_m = 0$ for all but finitely many $m$,
equipped with coefficient-wise sum, and product in which the elements of $R$
commute with the elements of $M$.
\end{remark}

\begin{emdef}
The \emph{free ring} on a set $X$ (denoted $\mathbb{Z}X^*$) is the monoid
ring of the free monoid $X^*$ over the ring $\mathds{Z}$ of the integers.
\end{emdef}

\begin{theorem}\label{thm:ring}
There exist non-commutative and non-Boolean c.e.~rings $R$ satisfying that
$=_R \simeq_{\mathrm{s}} \sim_T$.
\end{theorem}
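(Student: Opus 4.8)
The plan is to reduce everything to Fact~\ref{fact:Montagna-caratterizzazione}: it suffices to produce a non-commutative, non-Boolean c.e.~ring $R$ with $0\neq_R1$ such that $=_R$ is u.f.p.~and possesses a diagonal function. Two of these properties come for free once the ring is in hand. Any ring with unity is an algebra of exactly the shape required by Lemma~\ref{lem:from-ei-to-ufp} — $+$ is associative, and $a+0=_Ra$, $a\cdot0=_R0$, $a\cdot1=_Ra$ hold identically — so that lemma yields that $=_R$ is u.f.p.~as soon as the pair $(U_0,U_1)$, with $U_i=\{x:x=_Ri\}$, is effectively inseparable; and in any ring with $0\neq_R1$ the map $x\mapsto x+1$ is a diagonal function for $=_R$, since $x+1=_Rx$ would force $1=_R0$. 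So the whole proof comes down to exhibiting a non-commutative, non-Boolean c.e.~ring $R$ with $0\neq_R1$ and $(U_0,U_1)$ effectively inseparable.

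For this I would start from a ground c.e.~ring $B$ whose pair $([0]_{=_B},[1]_{=_B})$ is already effectively inseparable, the natural candidate being the Lindenbaum algebra of $T=\mathrm{PA}$ regarded as a Boolean ring: by item~(3) above it is a c.e.~Boolean algebra, and putting $+=$ symmetric difference, $\cdot=$ meet, $0=\bot$, $1=\top$ makes it a nontrivial c.e.~Boolean ring $B$ in which $[\bot]_{=_B}$ and $[\top]_{=_B}$ are, respectively, the codes of refutable sentences and the codes of theorems of $T$ — a classically effectively inseparable pair, by the effective essential incompleteness of $T$. I would then form $R=B\{x,y\}^{*}$, the monoid ring over $B$ of the free monoid on two generators (equivalently, a c.e.~quotient of the free ring $\mathbb{Z}X^{*}$ on the generator set $X=\{x,y\}\cup(\text{sentence codes})$, described through normal forms as in Remark~\ref{normalform}): its elements are the finite formal sums $\sum_w r_ww$ with $r_w\in B$ and $w\in\{x,y\}^{*}$; addition and multiplication are computable, and $=_R$ — which identifies two such sums iff their corresponding coefficients are $=_B$-equivalent — is a ceer and a ring congruence, so $R$ is a c.e.~ring. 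It is non-commutative ($xy\neq_Ryx$) and non-Boolean ($x^2\neq_Rx$), and $0\neq_R1$, all because distinct monomials carry coefficients differing by $\top$ versus $\bot$, which are not $=_B$-equivalent.

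It remains to check that $(U_0,U_1)$ is effectively inseparable in $R$, where $U_0$ consists of the sums all of whose coefficients lie in $[\bot]_{=_B}$, and $U_1$ of the sums whose coefficient of the empty word lies in $[\top]_{=_B}$ and all of whose other coefficients lie in $[\bot]_{=_B}$. The crucial observation — and the place where the particular shape of $R$ matters, since for a direct product $B\times(\text{finite ring})$ the corresponding pair is not effectively inseparable (it is covered by two disjoint decidable sets) — is that the ``constants'' embedding $\delta:B\to R$, $\delta(b)=b\lambda$, is a total computable function with $\delta^{-1}(U_0)=[\bot]_{=_B}$ and $\delta^{-1}(U_1)=[\top]_{=_B}$; in other words the $0$ and the $1$ of $R$ lie inside the copy of $B$ picked out by $\delta$, so that $\delta$ is an $m$-reduction of the pair $([\bot]_{=_B},[\top]_{=_B})$ to the pair $(U_0,U_1)$. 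Effective inseparability is preserved by such reductions: given disjoint c.e.~supersets $W_a\supseteq U_0$ and $W_b\supseteq U_1$, the preimages $\delta^{-1}(W_a)$ and $\delta^{-1}(W_b)$ are disjoint c.e.~supersets of $[\bot]_{=_B}$ and $[\top]_{=_B}$, uniformly in $a,b$, so feeding them to a productive function for the latter pair and then applying $\delta$ produces, uniformly in $a,b$, an element of $R$ outside $W_a\cup W_b$. Hence $(U_0,U_1)$ is effectively inseparable; Lemma~\ref{lem:from-ei-to-ufp} then gives that $=_R$ is u.f.p., $x\mapsto x+1$ is a diagonal function for $=_R$, and Fact~\ref{fact:Montagna-caratterizzazione} delivers $=_R\simeq_{\mathrm{s}}\sim_T$. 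I expect the only step genuinely needing care to be the effective-inseparability claim; once organized as transfer of effective inseparability along $\delta$ it is routine, the remaining work being the bookkeeping that $R$ is a bona fide c.e.~algebra.
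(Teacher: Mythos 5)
Your proposal is correct, but it reaches the witness ring by a genuinely different construction from the paper's. The paper starts from an abstract effectively inseparable pair $(U,V)$ of c.e.\ sets and takes the quotient of the free ring $\mathbb{Z}X^{*}$ by the ideal $K$ generated by $\{x_i : i \in U\}\cup\{1-x_j : j \in V\}$; the bulk of its proof is then a delicate cancellation-of-monomials argument showing that $1\notin K$ and that $x_i\in[0]_K$ iff $i\in U$ and $x_j\in[1]_K$ iff $j\in V$, so that $(U,V)\leq_m([0]_K,[1]_K)$ and effective inseparability transfers to the pair $(U_0,U_1)$ of the quotient. You instead take the monoid ring over the Lindenbaum Boolean ring $B$ of $T$ on the free monoid $\{x,y\}^{*}$, with the congruence defined coefficient-wise by $=_B$; disjointness of $[0]_R$ and $[1]_R$, non-commutativity, non-Booleanness, and the $m$-reduction of $([\bot]_{=_B},[\top]_{=_B})$ to $(U_0,U_1)$ along the constants embedding $\delta$ are then all immediate, and the combinatorial normal-form work disappears entirely. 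Both arguments conclude identically via Lemma~\ref{lem:from-ei-to-ufp}, a translation-by-a-nonzero-element diagonal function (the paper uses $d(u)=u+v$ for an arbitrary $v\neq_R 0$, you use $v=1$), and Fact~\ref{fact:Montagna-caratterizzazione}. Your route is shorter and more modular: it isolates the standard transfer of effective inseparability along an $m$-reduction of disjoint pairs, and in effect shows that \emph{any} c.e.\ ring with e.i.\ pair $(U_0,U_1)$ can be made non-commutative and non-Boolean by passing to a monoid ring over it. The price is a larger, less ``presented'' ring plus some coding bookkeeping (support must be read off the literal codes, with absent coefficients taken to be the designated zero, so that $=_R$ is a bona fide ceer and congruence), which you correctly flag as routine. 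Two points you should make explicit to close the argument: that the base pair (refutable sentences, theorems) of $T$ is effectively inseparable --- the classical fact about consistent c.e.\ extensions of $Q$ you allude to --- and that $U_0\cap U_1=\emptyset$ in $R$ (i.e.\ $0\neq_R 1$), which is needed both for the e.i.\ transfer and for nontriviality in Definition~\ref{def:ufp}; both follow at once from consistency of $T$ and the coefficient-wise definition of $=_R$.
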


\begin{proof}
Assume that $X = \left\lbrace x_i : \ i \in \omega \right\rbrace$ is a
decidable set and consider the free ring $R^-=\mathbb{Z}X^*$. Notice that, up
to coding, we can identify the universe of $R^-$ with $\omega$ and assume
that its operations are computable and equality is decidable.

Let $U,V \subseteq \mathbb{N}$ be an e.i.~pair of c.e.~sets, and consider the
ideal $K$ of $\mathbb{Z}X^*$ generated by
\[
\left\lbrace x_i : \ i \in U
\right\rbrace \cup \left\lbrace 1-x_j : \ j \in V
\right\rbrace.
\]
Thus, any element of $K$ is of the form
\begin{equation*} \tag{$\dag$}\label{ideal}
\sum_{i\in I}r_{i}\tau_{i}x_{i}^{U}\rho_{i}
+\sum_{j\in J}s_{j}\mu_{j}(1-x_{j}^{V})\nu_{j}.
\end{equation*}
where each of $r_i, s_j$ is in $\mathbb{Z}$, and each of $\tau_i, \rho_i$,
$\mu_j, \nu_j$ is in $X^*$, and finally $I\subseteq U$ and $J\subseteq V$ are
finite sets. Up to shrinking the sets of indices, we can suppose that no
further simplification can be made in either sum.

The ideal $K$ gives rise to a congruence, which we still denote with $K$,
such that $[0]_K = K$. We claim that $1 \notin K$, which implies
\[
[0]_K \cap [1]_K = \emptyset.
\]
To see that our claim is true, we show in fact that no nonzero integer can be
written as in (\ref{ideal}). Calculating we get
\begin{equation*}\tag{$\dag \dag$}\label{linear}
\sum_{i\in I}r_{i}\tau_{i}x_{i}^{U}\rho_{i} +
\sum_{j \in J_0}s_{j}\mu_{j}\nu_{j}
-\sum_{j \in J_0}s_{j}\mu_{j}x_{j}^{V}\nu_{j}
+\sum_{j \in J_1}s_{j}
-\sum_{j \in J_1}s_{j}x_{j}^{V},
\end{equation*}
where $J_{0}=\{j\in J: \mu_{j}\nu_{j} \neq \lambda\}$ and $J_1=\{j\in J:
\mu_{j}\nu_{j} = \lambda\}$. By our assumptions, neither the first, nor the
third, nor the last sum of (\ref{linear}) contain any pair of like monomials,
so that in these sums no further simplification can be made. In order to get
a nonzero integer $s$ from this sum we must have that
\begin{equation*}\tag{$\ddag$}\label{second}
0=\sum_{i\in I}r_{i}\tau_{i}x_{i}^{U}\rho_{i}
+\sum_{j \in J_0}s_{j}\mu_{j}\nu_{j}-
\sum_{j \in J_0}s_{j}\mu_{j}x_{j}^{V}\nu_{j}-\sum_{j \in J_1}s_{j}x_{j}^{V},
\end{equation*}
$\sum_{j \in J_1}s_{j}=s$, and $J_1 \ne \emptyset$.

We are going to see that  the assumption $J_{1}\ne \emptyset$ leads to a
contradiction, by showing that there would be an infinite sequence
$\alpha_{n}=s\sigma_{n}$ ($n \ge 1$), with $\sigma_{n}\in  \{x_{j}: j \in
V\}^{*}$ of length $n$, and $s \in \mathbb{Z}\smallsetminus \{0\}$, such that
each $\alpha_{n}$ occurs as an summand in the second sum of (\ref{second}).

Take $j \in J_{0}$ and let $s=s_j$. So $-s_jx_{j}^{V}$ occurs in the fourth
sum of (\ref{second}). To cancel the monomial $s_{j}x_{j}^{V}$ in
(\ref{second}), there must be a monomial of the form
$s\mu_{j_{1}}\nu_{j_{1}}$ (hence from the second sum) such that
$\mu_{j_{1}}\nu_{j_{1}}=x^{V}_{j}$. Let $\sigma_{1}=x^{V}_{j}$, and
$\alpha_{1}=s \sigma_{1}$. So $\alpha_{1}$ satisfies the claim. Now suppose
that we have found already $\alpha_{n}=s \sigma_{n}$ in the second sum and
satisfying the claim. Then $\sigma_{n}$ is of the form
$\mu_{j_{n}}\nu_{j_{n}}$ which (via multiplication
$s\mu_{j_{n}}(1-x_{j_{n}})\nu_{j_{n}}$ in (\ref{ideal})) corresponds to an
summand in the third sum $-s\mu_{j_{n}}x_{j_{n}}\nu_{j_{n}}$, so that
$\sigma_{n+1}= \mu_{j_{n}}x_{j_{n}}\nu_{j_{n}}$ has length $n+1$, and lies in
$\{x_{j}: j \in V\}^{*}$. Again, this cannot cancel with anything in the
first sum, for each summand in the first sum contains an element indexed from
$U$; it cannot cancel with anything in the fourth sum, nor can it cancel with
anything in the third sum, because we have assumed that it does not contain
like monomials; so it must cancel with something in the second sum, which
therefore contains  $\alpha_{n+1}=s\sigma_{n+1}$ satisfying the  claim.

\begin{lemma}
$(U,V) \leq_m ([0]_J, [1]_J)$.
\end{lemma}

\begin{proof}
We want to show that $(U,V) \leq_m ([0]_J, [1]_J)$ via $f(i)=x_i$. Thus we
must verify that
\[
x_i \in [0]_K \Leftrightarrow i \in U,
\]
and
\[
x_j \in [1]_K \Leftrightarrow j \in V.
\]

The facts that $i \in U$ implies $x_i \in [0]_K$ and $j \in V$ implies $x_j
\in [1]_{K}$ are obvious.

On the other hand, if $x_{i} \in [0]_K$, then $x_{i}$ must be of the form
(\ref{ideal}), from which we obtain again the expression (\ref{linear}), with
the same assumptions on already done simplifications. Assume that there is an
$x_i \in [0]_k$ with $i \notin U$. Since no nonzero integer must appear,
either $J_1 = \emptyset$ or $J_1$ has at least two elements. Assume the
latter. Then in the last sum there is a monomial $-s_j x^V_j$ which must
cancel with a like monomial, which can be nowhere but in the second sum. But
the existence of such a monomial implies that there is a monomial of the form
$s_j x^V_j x^V_{j'}$ or $s_j x^V_{j'} x^V-j$ which in turn leads to a
contradiction, by an argument similar to the one above. Thus $J_1$ must be
empty. Now assume $J_0$ is non empty, so that there is $j \in J_0$ with
$\mu_j \nu_j = x_i$, where $i \notin U$. But then in the third sum there must
be a corresponding monomial ($-x_i x^V_j$ or $-x^V_j x_i$), whose existence,
by reasoning as in the argument used to see that non nonzero integer lies in
$K$, leads again to a contradiction.

Since $x_j \in [1]_K$ if and only if $1-x_j \in [0]_K$, a completely similar
argument shows that $x_j \in [1]_K$ implies $j \in V$.
\end{proof}

Consider the ring $R$ obtained by dividing $R^-$ by the congruence $K$. $R$
is a c.e.~ring according to Definition~\ref{def:c.e.algebras}, as it can be
positively presented as $\langle \omega, F, E\rangle$ where we effectively
identify modulo coding $R^{-}$ with $\omega$, $F$ is the set of computable
operations on $\omega$ which correspond via coding to the operations of
$R^{-}$, and $E$ is the ceer induced on $\omega$ by the congruence $K$.

Moreover, $R$ is equipped with two binary operations $+, \cdot$ (which are
its ring binary operations) and two constants $0,1$ (again, its ring zero-ary
operations). Therefore $=_R$ is a u.f.p.~ceer by
Lemma~\ref{lem:from-ei-to-ufp}. To conclude that $=_R$ is strongly isomorphic
to $\sim_T$ is then enough by Fact~\ref{fact:Montagna-caratterizzazione} that
we find a diagonal function for $=_R$. For this, just take any $v \ne_R 0$,
and consider the function $d(u)=u+v$. It immediately follows that $d(u) \ne_R
u$, for otherwise $v=_R 0$.
\end{proof}


\end{document}